\newtheorem{thm}{Theorem}[section]
\newtheorem{lem}[thm]{Lemma}
\newtheorem{cor}[thm]{Corollary}
\newtheorem{prop}[thm]{Proposition}
\theoremstyle{definition}
\newtheorem{defn}[thm]{Definition}
\newtheorem{remark}[thm]{Remark}
\newcommand{\N}{\mathbb{N}}
\newcommand{\C}{\mathbb{C}}
\newcommand{\R}{\mathbb{R}}
\newcommand{\E}{\mathbb{E}}
\renewcommand{\P}{\mathbb P}
\newcommand{\cJ}{\mathcal J}
\newcommand{\cK}{\mathcal K}
\newcommand{\vp}{\varepsilon}
\newcommand{\ra}{\rangle}
\newcommand{\la}{\langle}
\newcommand{\kin}{\!\in\!}
\newcommand{\FSS}{\mathcal{F}\mathcal{S}\mathcal{S}}
\newcommand{\FD}{\mathcal{F}\mathcal{D}}
\newcommand{\StSi}{\mathcal{S}t\mathcal{S}i}
\newcommand{\sign}{\text{\rm sign}}
\newcommand{\spa}{\text{\rm span}}
\begin{document}

\title{On the  Closed  Subideals of $L(\ell_p\oplus \ell_q)$}
\author{Th.~Schlumprecht}
\thanks{Research partially supported by NSF grant DMS 0856148}
\address{Department of Mathematics, Texas A\&M University, College Station, Texas 77843, USA}
\email{schlump@math.tamu.edu}
\begin{abstract}  In this paper we  first review the  known  results about the closed 
 subideals of the space of bounded operator on $\ell_p\oplus \ell_q$, $1<p<q<\infty$, and  then
 construct several new ones.
\end{abstract}
  \keywords{Operator ideal, $\ell_p$-space}
\subjclass[2000]{Primary: 47L20. Secondary: 47B10, 47B37}

\maketitle
\section{Introduction}\label{S:0}

For very few Banach spaces $X$ all the closed subideals of $L(X)$, the algebra of all bounded 
and linear operators on $X$, are determined.
In 1941 Calkin \cite{Ca}
 showed that the only proper, non-trivial and   closed
ideal of $L(\ell_2)$ is the ideal of compact operators. The same was
shown to be true for $\ell_p$ $(1\le p<\infty)$ and $c_0$
in~\cite{GMF}.  Until very recently it was open if there are any  other  infinite dimensional Banach
spaces $X$,  for  which the compact operators are the   only  proper,  non-trivial  and closed subideal of $L(X)$.  We call such spaces {\em simple}. Then Argyros and Haydon \cite{AH} established
the  existence  of Banach spaces with a basis on which all operators are a compact perturbation 
of a scalar multiple of the identity. It follows immediately that  such spaces are simple. But it is not known whether or not there are any other simple  spaces admitting an unconditional 
 basis (and thus having a rich structure of operators on them).

The structure of the closed ideals of operators on
non separable Hilbert spaces was independently obtained by Gramsch
\cite{Gr} and Luft \cite{Lu}. Recently Daws \cite{Da}
extended their results to non separable $\ell_p$-spaces, $1\le
p<\infty$, and non separable $c_0$-spaces.

Beyond these spaces the complete
structure of closed ideals in $L(X)$ was  described
in~\cite{LLR} for
$X=\bigl(\bigoplus_{n=1}^\infty\ell_2(n)\bigr)_{c_0}$ and
in~\cite{LSZ} for
$X=\bigl(\bigoplus_{n=1}^\infty\ell_2(n)\bigr)_{\ell_1}$. In  both
cases, there are exactly two nested proper non-zero closed ideals, namely the compacts and the closure of all operators factoring through $c_0$, or $\ell_1$, respectively.  
Apart from those mentioned above, there are no other separable Banach
spaces $X$ for which the structure of the closed ideals in $L(X)$ is
completely known. 
 It is still open whether or not the  closed subideals of the operators on the  spaces $(\oplus_{n=1}^\infty\ell_1(n))_{c_0}$ and $(\oplus_{n=1}^\infty\ell_\infty(n))_{\ell_1}$
  admit the same sublattice structure (for partial results see \cite{LOSZ}). 
An interesting space for studying the closed subideals of its bounded linear operators is the 
space  $X$ introduced in \cite{S}.  This space is {\em  complementably  minimal} \cite{AS1}, which means that every infinite dimensional
 closed subspace of $X$ contains a further subspace which is complemented in $X$ and isomorphic to $X$. This implies that
 the  strictly singular operators  (see the definition at the end of this section) is the only maximal proper closed subideal of $L(X)$.
 As shown in \cite{AS2}, $X$ admits strictly singular but not compact operators, and it is conjectured that $L(X)$ contains infinitely
 many closed subideals, all of which have to lie between the ideal of compact operators, and the ideal of strictly singular operators.  

A space whose closed ideals  of operators attracted the  attention of several researches is the  $p$th quasi reflexive James $J_p$,  with $1<p<\infty$.
  Edelstein and  Mityagin \cite{EM} observed  that the ideal of weakly compact operators on $J_p$ is a  maximal proper subideal of $L(J_p)$
  and  Laustsen proved in   \cite{La} that it is the only one. 
  In  \cite{LW}, for $p=2$, and  in \cite{La}, for general $p\in(1,\infty)$, it was shown that the closure of the operators on $J_p$ factoring through $\ell_2$ 
 contains strictly the  ideal of compact operators and is strictly contained in the ideal of weakly compact operators. Very 
recently Bird, Jameson and Laustsen \cite{BJL} found a new closed sub ideal of $L(J_p)$ and  proved that the closure of the ideal of operators factoring through the $\ell_p$-sum of
$\ell_\infty(n)$, $n\kin\N$, is strictly larger than the closure of the ideal of operators factoring through $\ell_p$ and strictly smaller then the ideal of weakly compact operators.

Although studied in several papers (cf.\cite{Mi},\cite{Pi} and \cite{SSTT}) the structure of
 the  closed subideals of $L(\ell_p\oplus\ell_q)$, $1<p<q<\infty$ remains a mystery. 
 It is not even known whether or not  $L(\ell_p\oplus\ell_q)$, contains infinitely many subideals.
There were several results 
proved in the 1970's concerning various special ideals or special
cases of $p$ and~$q$. We refer the reader to the book by Pietsch
\cite[Chapter~5]{Pi} for details. In particular,
\cite[Theorem~5.3.2]{Pi} asserts that $L(\ell_p\oplus \ell_q)$, with
$1\le p<q$, has exactly two proper maximal ideals (namely, the ideal of
operators which factor through $\ell_p$ and the ideals of operators
which factor through $\ell_q$), and establishes a one-to-one
correspondence between the non-maximal  proper subideals of  $L(\ell_p\oplus \ell_q)$
and the closed  ideals in $L(\ell_p,\ell_q)$. 
 By proving that the formal identity $I(p,q):\ell_p\to\ell_q$ is {\em finitely strictly singular} (see the definition at the end of this section)
  and establishing the existence of an operator $T:\ell_p\to \ell_q$ which is not  finitely strictly singular Milman \cite{Mi} 
  concluded that  $L(\ell_p,\ell_q)$ contains at least two non trivial, proper and closed subideals. 
In \cite{SSTT} the study of the structure of the closed subideals of $L(\ell_p,\ell_q)$ 
was continued, and, among other results, it was discovered that the  lattice of subideals
of $L(\ell_p,\ell_q)$ 
is not linearly ordered,   and contains at least 4  nontrivial, proper and closed subideals if $1<p<2<q<\infty$. 
In this paper we  increase this number to 7.

In Section \ref{S:2} we will recall the known results on the closed subideals of $L(\ell_p\oplus\ell_q)$
and $L(\ell_p,\ell_q)$, and sketch the proof of several of them. In Section \ref{S:3} we will 
formulate and prove our main result (see Theorem \ref{T:2.1}).

Let us first recall some necessary notation.

If $X$ and $Y$ are Banach spaces, $L(X,Y)$
denotes the space of bounded linear operators $T:X\to Y$, and if $X=Y$ we write $L(X)$ instead
of  $L(X,X)$.
A linear subspace $\cJ\subset L(X,Y)$, is called a {\em subideal of $L(X,Y)$}, if for all $A\in L(Y)$,  
$B\in L(X)$, and $T\in \cJ$ also $A\circ T\circ B\in\cJ$.  A {\em closed subideal of $L(X,Y)$} is a
 subideal which is closed in the operator norm.  
 We say that a subideal $\cJ\subset L(X,Y)$   is {\em non trivial} if it is not the {\em zero ideal } $\{0\}$ and proper if it is not all of $L(X,Y)$.

 The following is a list of some important   closed subideals of $L(X,Y)$.

 $\FD(X,Y)$ is the closure of the ideal of operators with finite dimensional rank. Note that 
  any nontrivial closed subideal $\cJ$  in $L(X,Y)$ contains all of $\FD(X,Y)$. This follows from the
   fact that $\cJ$ is closed under taking sums, under multiplication by elements
    of $L(X)$ from the right, under multiplication from the left by elements of $L(Y)$,  and that it must contain a non zero operator (and thus a rank 1 operator).
    Thus, for all infinite dimensional Banach spaces $X$ and $Y$ the ideal $\FD(X,Y)$ is the   minimal  nontrivial  closed subideal of $L(X,Y)$. 
    
   $\cK(X,Y)$ denotes the ideal of compact operators . All the spaces we consider are spaces
  with a basis.  Thus, these spaces  have the approximation property, which means  that 
  $\FD(X,Y)=\cK(X,Y)$. 
  
  $\StSi(X,Y)$ is the closed ideal of operators $T:X\to Y$ which are {\em strictly singular}, i.e. 
 on no infinite dimensional subspace $Z$ of $X$  is the restriction of  $T$ onto $Z$  an isomorphism.
 
  $\FSS$ is the closed ideal of {\em finitely strictly singular operators}. A linear bounded operator $T:X\to Y$, is called 
  {\em finitely strictly singular } if for all $\vp>0$ there is an $n=n_\vp\kin\N$ so that for any $n$-dimensional subspace $E$ of $X$,
  there is an $x\in E$, with $\|x\|=1$, so that $\|T(x)\|\le \vp$.
  
   If $W$ and $Z$ are Banach spaces and $S:W\to Z$ a bounded linear operator, we denote by $\cJ^S(X,Y)$ the closure of the ideal generated   by all
   operators  $T\in L(X,Y)$, which factor through $S$, thus $T= A\circ S \circ B$, with $A\in L(Z,Y)$  and $B\in L( X,W)$.
   In general the set $\{ A\circ S \circ B, A\in L(Z,Y) \text{ and } B\in L( X,W)\}$ is not closed under addition and therefore not an ideal.
    But if the operator 
    $$S\oplus S: W\oplus W\to Z\oplus Z, \qquad(w_1,w_2)\mapsto (S(w_1),S(w_2)),$$ 
    factors through $S$, then  $\{ A\circ S \circ B, A\in L(Z,Y) \text{ and } B\in L( X,W)\}$  is an ideal and we conclude
    in  that case that 
   \begin{equation}\label{E:1.2} \cJ^S(X,Y)=\overline{\{ A\!\circ\!S\!\circ\!B : A\in L(Z,Y) \text{ and } B\in L( X,W)\}}.
   \end{equation}
   Let $I(p,q):\ell_p\to \ell_q$  be the formal inclusion  (using that $\ell_p$ is a subset of $\ell_q$), for  $1\le p<q \le \infty$.
   It is easily seen that    $I(p,q)\oplus I(p,q)$ factors through $ I(p,q)$  and we conclude that
  $
   \cJ^{I(p,q)} (X,Y)=\overline{\{  A\!\circ\! I(p,q) \!\circ\!B: A\in L(\ell_q,Y) \text{ and } B\in L( X,\ell_p)\}}.$

   If $I_Z$ is the identity on some Banach space $Z$  we write $\cJ^Z$ instead of $\cJ^{I_Z}$, and we note that if $Z$ is isomorphic 
   to $Z\oplus Z$ it follows that
  \begin{equation}\label{E:1.2a}
   \cJ^Z(X,Y)=\overline{\{  A\!\circ\!S\!\circ\!B: A\in L(Z,Y) \text{ and } B\in L( X,Z)\}}.
   \end{equation}
   If $X=Y$ we will write $\cK(X)$, $\FSS(X)$ etc. instead of $\cK(X,X)$, $\FSS(X,X)$ etc.

For $1\le p<\infty$, we denote the unit vector basis of $\ell_p=\ell_p(\N)$ by $(e_{(p,j)}:j\kin\N)$ (if $p=\infty$ we consider $c_0$ instead of $\ell_\infty$). 
 The conjugate of $p$ is denoted by $p'$, i.e. $\frac1p+\frac1{p'}=1$.
For $n\kin\N$ we denote the $n$-dimensional 
$\ell_p$ space by $\ell_p(n)$ and its unit vector basis by  $(e_{(p,n,j)}:j\!=\!1,2,\ldots,n)$. The usual norm on $\ell_p$ or $\ell_p(n)$, $n\kin\N$ is denoted by $\|\cdot\|_p$.
 If $X_n$ is a Banach space for $n\kin\N$,
the {\em $\ell_p$-sum of $X_n$}, $n\kin\N$, is the space of all  sequences $(x_n:n\kin\N)$, with $x_n\in X_n$, for $n\in\N$,
and 
$$\|(x_n)_{n\in\N}\|_p=\Big(\sum_{n\in\N} \|x_n\|^p\Big)^{1/p}<\infty, \text{ if $p<\infty$, and }$$

We  denote the $\ell_p$-sum of $(X_n)$ by $(\oplus_{n=1}^\infty X_n)_{p}$. If $p= \infty $ we denote by 
$(\oplus_{n=1}^\infty X_n)_{\infty}$ the {\em $c_0$-sum,} the space of all sequences $(x_n)$, with $x_n\in X_n$, for $n\!\in\!\N$, for
which $\lim_{n\to\infty} \|x_n\|=0$.

The sphere and the unit ball of a Banach space are denoted by $S_X$ and $B_X$, respectively.
 For simplicity all our Banach spaces are defined over the real  field $\R$. It is easy to see how our results can be extended to Banach spaces
over the complex field $\C$.

\section{Review of the known results on the closed subideals of $L(\ell_p\oplus\ell_q)$ and $L(\ell_p,\ell_q)$}\label{S:2}

We will now review the known results on the lattice structure of subideals of  $L(\ell_p\oplus\ell_q)$.  
 We will assume from now on that $1<p<q<\infty$ and later  that $1\!<\!p\!<\!2\!<\!q\!<\!\infty$.

Every operator 
$T=\ell_p\oplus\ell_q \to  \ell_p\oplus\ell_q$, consists of four operators  $T_{(1,1)}\kin L(\ell_p)$, $T_{(1,2)} \in L(\ell_q,\ell_p)$ and $T_{(2,1)}\kin L(\ell_p,\ell_q)$,
and $T_{(2,2)}\in L(\ell_p,\ell_p)$, and acts as a  2 by 2 matrix on the elements of $\ell_p\oplus\ell_q$
$$T=\begin{pmatrix} T_{(1,1)} &T_{(1,2)}\\ T_{(2,1)} &T_{(2,2)}  \end{pmatrix}:\ell_p\!\oplus\!\ell_q\to \ell_p\!\oplus\!\ell_q,\quad
(x,y)\mapsto \big(T_{(1,1)}(x)\!+\!T_{(1,2)}(y),  T_{(2,1)}(x)\!+\!T_{(2,2)}(y)\big).$$

By the above cited  result from \cite{GMF},  the operators $T_{(1,1)}$ and $T_{(2,2)}$ are either compact or the identity on $\ell_p$, respectively $\ell_q$, factors through them.
 By Pitt's Theorem (c.f.  \cite[Proposition 6.25]{FHHMPZ}), $T_{(1,2)}$ is compact,  and since every infinite dimensional subspace of $\ell_p$ contains a subspace isomorphic to $\ell_p$, 
 and  since $\ell_p$ and $\ell_q$ are incomparable, we conclude that $T_{(2,1)}$ must be strictly singular.
So, if $\cJ$ is  a  closed subideal of $L(\ell_p\oplus \ell_q)$  which contains an operator $T$ for which $T_{(1,1)}$ and $T_{(2,2)}$ are  not 
compact, we conclude that the  identity on $\ell_p\oplus \ell_q$ factors through $T$ and thus $\cJ= L(\ell_p\oplus \ell_q)$. If $\cJ$ contains an operator for which
 $T_{(1,1)}$  is not compact, but for all elements  $U\in \cJ$, $U_{(2,2)}$ is compact, then  the identity on $\ell_p$ factors through $T$, but not the identity
  on     $\ell_q$, and  we therefore  deduce that $J$ must be the closure of  the operators factoring  through $\ell_p$, which must therefore be a maximal proper subideal 
   of $L(\ell_p\oplus \ell_q)$ (for more details see \cite[Theorem  5.3.2]{Pi}). Similarly we conclude that the closure of all operators factoring through $\ell_q$ is a maximal proper 
 subideal of $L(\ell_p\oplus \ell_q)$. 
 
For all other closed  proper subideals $\cJ\subset L(\ell_p\oplus \ell_q)$, and all $T\in \cJ$ it  therefore follows that  $T_{(1,1)}$, $T_{(1,2)}$ and $T_{(2,2)}$ are compact, and can therefore be 
 approximated by finite rank operators which factor through $\ell_p$ as well as $\ell_q$. Of course $T_{(2,1)}$  also factors through $\ell_p$ as well as $\ell_q$, and we deduce
 that all other closed ideals are subideals of  $\cJ^{\ell_p}(\ell_p\oplus\ell_q)\cap \cJ^{\ell_q}(\ell_p\oplus\ell_q)$, and thus not maximal proper closed ideals. 
 
 Assume now  that $\cJ\subset \cJ^{\ell_p}(\ell_p\oplus\ell_q)\cap \cJ^{\ell_p}(\ell_p\oplus\ell_q)$ is a closed ideal in $L(\ell_p\oplus\ell_q)$
    An easy computation yields that $\tilde\cJ:=\{T_{(2,1)}: T\in\cJ\}$ is a closed subideal of $L(\ell_p,\ell_q)$, and that for two  different 
   ideals 
  $\cJ_1,\cJ_2\subset \cJ^{\ell_p}(\ell_p\oplus\ell_q)\cap \cJ^{\ell_p}(\ell_p\oplus\ell_q)$ the ideals $\tilde\cJ_1$ and $\tilde\cJ_2$ are different.
   Conversely if   $\cJ$  is a closed  subideal  of $L(\ell_p,\ell_q)$ then
   $$\cJ'=\left\{ \begin{pmatrix} T_{(1,1)} &T_{(1,2)}\\ T_{(2,1)} &T_{(2,2)} \end{pmatrix} : T_{(2,10}\in\cJ\text{ and } T_{(1,1)}\in\cK(\ell_p),\, T_{(1,2)}\kin \cK(\ell_q,\ell_p), \text{ and } T_{(2,2)}\kin\cK(\ell_q)\right\}$$
   is a closed subideal of $L(\ell_p\oplus\ell_q)$ and for two different closed subideals $J_1,J_2\subset L(\ell_p,\ell_q)$, $\cJ_1'$ and $\cJ_2'$ are different.
  Thus there is a   bijection between the  set of all closed subideals of $L(\ell_p,\ell_q)$ and the non maximal closed subideals of  
    $L(\ell_p\oplus  \ell_q)$, which preserves the lattice structure  with respect to  inclusions.

Let us summarize  the observations we just made in the following proposition.
\begin{prop}  For $1<p<q<\infty$, the space   $L(\ell_p\oplus\ell_q)$ has exactly two maximal proper closed subideals, namely $\cJ^{\ell_p}(\ell_p\oplus\ell_q)$ and $\cJ^{\ell_q}(\ell_p\oplus\ell_q)$.

All other closed subideals of   $L(\ell_p\oplus\ell_q)$, are subideals of  
$\cJ^{\ell_p}(\ell_p\oplus\ell_q) \cap\cJ^{\ell_q}(\ell_p\oplus\ell_q)$, and there is 
 a bijection between the closed   subideals of  
$\cJ^{\ell_p}(\ell_p\oplus\ell_q)\cap\cJ^{\ell_q}(\ell_p\oplus\ell_q)$ and closed subideals of
 $L(\ell_p,\ell_q)$ which preserves the lattice structure.
\end{prop}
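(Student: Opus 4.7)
My plan is to follow the case analysis already laid out in the excerpt, making each step rigorous, and then to verify the bijection at the end. Throughout I represent each $T\kin L(\ell_p\oplus\ell_q)$ by its $2\!\times\!2$ matrix of entries $T_{(i,j)}$, and I let $P_p,P_q$ be the canonical projections and $J_p,J_q$ the canonical injections, so $T_{(i,j)}$ can be recovered from $T$ by $T_{(i,j)}=P_i\circ T\circ J_j$, and conversely a $2\!\times\!2$ block of entries assembles to an operator on $\ell_p\oplus\ell_q$. The elementary but essential observation is that $\cJ$ being a subideal implies that for each fixed $(i,j)$ the set $\{T_{(i,j)}:T\kin\cJ\}$ is a closed subideal of the appropriate $L(\cdot,\cdot)$, and that assembling any compatible quadruple back into a matrix produces an element of $\cJ$; this is what makes the correspondence with $L(\ell_p,\ell_q)$ work.

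First I would record the three ``rigidity'' facts about the entries that hold for \emph{any} $T\kin L(\ell_p\oplus\ell_q)$: by Pitt's theorem $T_{(1,2)}\kin\cK(\ell_q,\ell_p)$; since $\ell_p,\ell_q$ are totally incomparable, $T_{(2,1)}\kin\StSi(\ell_p,\ell_q)$; and by the Gohberg--Markus--Feldman theorem each of $T_{(1,1)},T_{(2,2)}$ is either compact or else the identity on $\ell_p$, respectively $\ell_q$, factors through it. Next, for a closed proper subideal $\cJ$ I split into cases according to which diagonal entries can be non-compact. If some $T\kin\cJ$ has both $T_{(1,1)}$ and $T_{(2,2)}$ non-compact, composing with projections/injections and the operators through which the two identities factor yields the identity of $\ell_p\oplus\ell_q$ in $\cJ$, contradicting properness. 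If only the $(1,1)$ entry can be non-compact across $\cJ$, I would show $\cJ=\cJ^{\ell_p}(\ell_p\oplus\ell_q)$: one inclusion follows because then $I_{\ell_p}$ factors through some $T\kin\cJ$ and every operator factoring through $\ell_p$ is thus in $\cJ$; the other inclusion follows because $\cJ^{\ell_p}(\ell_p\oplus\ell_q)$ is proper (its elements have compact $(2,2)$-entry, since an operator factoring through $\ell_p$ restricted to $\ell_q$ lands in a strictly singular image by Pitt). The symmetric case gives $\cJ=\cJ^{\ell_q}(\ell_p\oplus\ell_q)$. The remaining case, in which every $T\kin\cJ$ has both $T_{(1,1)}$ and $T_{(2,2)}$ compact, forces $\cJ\subset\cJ^{\ell_p}(\ell_p\oplus\ell_q)\cap\cJ^{\ell_q}(\ell_p\oplus\ell_q)$, because compact operators on $\ell_p$ and $\ell_q$ are norm limits of finite rank operators (which trivially factor through either space), and $T_{(1,2)}$ and $T_{(2,1)}$ obviously factor through $\ell_p$ and through $\ell_q$.

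For the bijection, I define
\[
\Phi:\cJ\mapsto \tilde\cJ=\{T_{(2,1)}:T\kin\cJ\}
\]
from the lattice of closed subideals of $\cJ^{\ell_p}(\ell_p\oplus\ell_q)\cap\cJ^{\ell_q}(\ell_p\oplus\ell_q)$ to the lattice of closed subideals of $L(\ell_p,\ell_q)$, and its candidate inverse
\[
\Psi:\cI\mapsto \cI',
\]
with $\cI'$ as defined in the excerpt (matrices whose $(2,1)$-entry lies in $\cI$ and whose other entries are compact). I would check that $\Phi(\cJ)$ is indeed a closed subideal of $L(\ell_p,\ell_q)$ (closedness uses that $S\mapsto P_q S J_p$ is continuous; the ideal property uses that any $A\kin L(\ell_q)$, $B\kin L(\ell_p)$ can be lifted to a block-diagonal operator on $\ell_p\oplus\ell_q$); and that $\cI'$ is a closed subideal of $L(\ell_p\oplus\ell_q)$ contained in $\cJ^{\ell_p}\cap\cJ^{\ell_q}$. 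The equality $\Phi\circ\Psi=\mathrm{id}$ is immediate from the definition of $\cI'$. The main step is $\Psi\circ\Phi=\mathrm{id}$: given $\cJ$, I must show every matrix $T$ with $T_{(2,1)}\kin\tilde\cJ$ and the other entries compact actually lies in $\cJ$. Here I use that the compact parts are norm limits of finite rank operators that factor through any infinite-dimensional subspace, combined with the subideal property applied to a single element of $\cJ$ with $(2,1)$-entry equal to a prescribed $S\kin\tilde\cJ$, to produce each such $T$ as a sum of compositions $A\circ T_0\circ B$ for $T_0\kin\cJ$. Lattice preservation (inclusion, intersection, closed sum) is then a routine verification from the explicit formulas for $\Phi$ and $\Psi$.

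The main obstacle I expect is not any single case but the book-keeping in $\Psi\circ\Phi=\mathrm{id}$: one has to argue carefully that the non-$(2,1)$ entries, which are arbitrary compacts, can always be manufactured from a \emph{single} element of $\cJ$ by pre- and post-composition with operators on $\ell_p\oplus\ell_q$, without losing control of the $(2,1)$ entry. This is handled by using finite rank approximations together with $2\!\times\!2$ elementary-matrix multipliers (injections/projections between the two summands and block-diagonal operators), whose existence in $L(\ell_p\oplus\ell_q)$ makes it straightforward to independently ``turn on'' each of the three compact entries while keeping the $(2,1)$ entry prescribed.
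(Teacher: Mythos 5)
Your proposal is correct and follows essentially the same route as the paper: the $2\times2$ matrix decomposition, the rigidity facts from Pitt, total incomparability, and Gohberg--Markus--Feldman, the case analysis on which diagonal entries are non-compact, and the correspondence $\cJ\mapsto\{T_{(2,1)}:T\in\cJ\}$ with inverse $\cI\mapsto\cI'$. You merely spell out in more detail the verification that the paper dismisses as ``an easy computation.''
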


 We are therefore  interested in the closed subideals of $L(\ell_p,\ell_q)$. 
Instead of writing $\cK(\ell_p,\ell_q)$,  $\FSS(\ell_p,\ell_q)$, or $\cJ^S(X,Y)$  etc. we will from now on simply  write $\cK$,  $\FSS$ or $ \cJ^S$ etc.

The following diagram summarizes the  results established in \cite{Mi} and \cite{SSTT}, under the assumption that $1<p<2<q<\infty$.

\noindent
\ \ \ \ \ \ \ \ \ \ \ \ \ \ {\footnotesize
\xymatrix@R=0pt@C=20pt{    
& & &  \boxed{\FSS} \ar@{=>}[dr] & &\\
\boxed{\cK}\ar @{=>}[r]&\boxed{\cJ}^{I(p,q)} \ar@{.>}[r]  &\boxed{\FSS\cap\cJ^{\ell_2}} \ar@{-->}[ur] \ar[dr] &-\!\!\!\!\| &
  \boxed{\FSS\vee\cJ^{\ell_2}} \ar@{.>}[r] & \boxed{L(\ell_p,\ell_q)}\\
& & &  \boxed{\cJ^{\ell_2}  } \ar@{-->}[ur] & &
}}\\

Here arrows stand for inclusions. A solid arrow ($\Rightarrow$ or
$\to$) between two ideals means that there are no other ideals
sitting properly between the two, while a double arrow coming out of
an ideal indicates the only immediate successor.  A
hyphenated arrow ($--\!\!\!>$) indicates a proper inclusion, while a dotted
one indicates that we do not know whether or not the inclusion is
proper. In particular, the closed ideals in $L(\ell_p,\ell_q)$ are not totally
ordered.

Let us explain the diagram ``from the left to the right'' (for a more detailed explanation we refer the reader to \cite{SSTT}):

If $T: \ell_p\to\ell_q$ is not compact, then there is a  normalized  block sequence $(x_n)$ in $\ell_p$ whose image $(y_n)=(T(x_n)$ is equivalent to $(e_{(q,j)}:j\kin\N)$  (the   unit vector basis in $\ell_q$) and so that $\spa(y_n:n\kin\N)$ is complemented in $\ell_p$.
It follows that $I(p,q)$ factors through $T$, and that therefore $\cJ^{I(p,q)}$ is the only successor of $\cK$.     

It is clear that $\cJ^{I(p,q)}\subset \cJ^{\ell_2}$ (recall that we assume that $p<2<q$). The fact that   $\cJ^{I(p,q)}\subset \FSS$ follows from the following  result in \cite{Mi}
 (see also \cite[Proposition 3.3]{SSTT}).

\begin{prop}\label{P:1.1} For any choices of $1\le p<q\le \infty$ is the formal identity $I(p,q)$ is a finitely strictly singular operator.\end{prop}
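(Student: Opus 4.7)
The plan is to reduce the proposition to a ``flat vector'' lemma of Milman, and then use a one-line interpolation between the $\ell_p$ and the $\ell_\infty$ norm.

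First I would establish the following auxiliary statement: for every $n$-dimensional subspace $E$ of $c_0$ there exist coordinates $k_1<k_2<\cdots<k_n$ and a vector $x\in E$ with $\|x\|_\infty=1$ and $|x(k_i)|=1$ for each $i$. The proof is an iterative peak-growing argument. Start from any $y_0\in E$ with $\|y_0\|_\infty=1$ and set $F_0=\{k : |y_0(k)|=1\}$; this set is nonempty and finite since $y_0\in c_0$. If $|F_0|\ge n$ we are done. Otherwise the subspace $E_0=\{y\in E : y|_{F_0}\equiv 0\}$ has dimension at least $n-|F_0|\ge 1$; pick $y'\in E_0\setminus\{0\}$ and look at $t\mapsto y_0+ty'$. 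On $F_0$ the coordinates stay $\pm1$ regardless of $t$; off $F_0$ the quantity $\sup_{k\notin F_0}|y_0(k)|$ is strictly less than $1$ (because $y_0\in c_0$ and the sup norm is attained precisely on $F_0$), so the line $t\mapsto y_0+ty'$ satisfies $\|y_0+ty'\|_\infty=1$ for all sufficiently small $|t|$. Because $y'$ has at least one nonzero coordinate, which must lie off $F_0$, the sup norm of $y_0+ty'$ tends to infinity as $|t|\to\infty$; hence, after possibly replacing $y'$ by $-y'$, there is a finite first time $t^*>0$ at which some coordinate outside $F_0$ reaches absolute value $1$. The vector $y_1:=y_0+t^*y'$ then has $\|y_1\|_\infty=1$ and peak set strictly larger than $F_0$. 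Iterating, the peak set grows by at least one at each step and in at most $n$ steps produces the desired $x$.

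Second, I would deduce the proposition. Given $\vp>0$, choose $n\in\N$ so large that $n^{-(1/p-1/q)}<\vp$ (with the convention $1/\infty=0$). Let $E$ be any $n$-dimensional subspace of $\ell_p$. Viewing $E$ inside $c_0$ via the natural inclusion, the auxiliary lemma yields $x\in E$ with $\|x\|_\infty=1$ and at least $n$ coordinates of absolute value $1$. Hence $\|x\|_p^p\ge n$, and the normalized vector $y=x/\|x\|_p$ satisfies $\|y\|_p=1$ and $\|y\|_\infty\le n^{-1/p}$. The one-line interpolation
\[ \|y\|_q^q=\sum_k |y(k)|^{q-p}\,|y(k)|^p\le \|y\|_\infty^{q-p}\|y\|_p^p\le n^{-(q-p)/p} \]
gives $\|y\|_q\le n^{-(1/p-1/q)}<\vp$, which is the required estimate. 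The case $q=\infty$ is even simpler, since the bound $\|y\|_\infty\le n^{-1/p}$ already does the job.

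The main obstacle is the auxiliary lemma; everything after it is bookkeeping. The subtlety one has to get right is that the perturbation $y'$ used to enlarge the peak set must vanish on the current peak set, as this is exactly what guarantees that old peaks survive while new ones are added. The positivity of $t^*$ in turn relies on the $c_0$-property of $y_0$: off the current peak set the coordinates have supremum strictly less than $1$ (the sup being attained on the peak set), so there is a whole interval $[0,t^*)$ on which the sup norm stays equal to $1$.
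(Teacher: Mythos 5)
Your proof is correct and follows essentially the same route as the paper: the ``flat vector'' lemma of Milman (a norm-one vector in any $n$-dimensional subspace of $c_0$ attaining its sup-norm on at least $n$ coordinates) followed by the interpolation $\|y\|_q^q\le\|y\|_\infty^{q-p}\|y\|_p^p$. The only difference is that you supply a full (and correct) proof of the auxiliary lemma, which the paper merely cites from Milman and \cite[Lemma 3.4]{SSTT}; your iterative peak-growing argument is the standard inductive proof of that lemma.
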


The way to verify Proposition \ref{P:1.1}  is to  show  first (see \cite{Mi} or \cite[Lemma 3.4]{SSTT}) by induction on $n\kin\N$, that in every $n$-dimensional subspace $E$ of $c_0$
there is  $x\in E$ which attains its sup-norm on at least $n$ coordinates. In order to see then, that $I(p,q)$ is finitely strictly singular, let $\vp>0$ and pick $n\kin\N$ with
 $n^{-(q-p)/q}<\vp$. If $E$ is any subspace of $\ell_p$ of dimension $n$ we can find $x\in E$,  $\|x\|_p=1$,
  so that $\|x\|_\infty\le n^{-1/p} $ (since the maximum is attained on at least $n$ coordinates),
  and thus $\|x\|_q^q=\sum_{i=1}^\infty |x(i)|^{q-p}|x(i)|^p \le   \|x\|_\infty^{q-p} \|x\|_p^p \le n^{p-q}$ and thus $\|x\|_q\le n^{-(q-p)/q}  \le \vp$. We therefore established that
  $\cJ^{I(p,q)}\subset \FSS\cap\cJ^{\ell_2}$. In Section \ref{S:2} we will show that this inclusion is strict. 
  More precisely, we will show that the ideals $\cJ^{I(p,2)}$ and  $\cJ^{I(2,q)}$  are two distinct
  closed ideals which lie between $\cJ^{I(p,q)}$ and $ \FSS\cap\cJ^{\ell_2}$.

In order to show that $\FSS\cap\cJ^{\ell_2}$ is not all of $L(\ell_p,\ell_q)$  Milman \cite{Mi}  used the fact that  $\ell_p$  (and $\ell_q$)is   isomorphic the 
 $\ell_p$-sum (respectively the  $\ell_q$ sum) of $\ell_2(n)$, $n\in\N$  (see \cite[page 73]{LT}). Letting $U:\ell_p\to (\oplus_{n\in\N}\ell_2(n))_p$ and
  $V:\ell_q\to(\oplus_{n\in\N}\ell_2(n))_q$ be isomorphisms and  letting $I'(p,q)$ be  the formal identity  
  $$I'(p,q): (\oplus_{n\in\N}\ell_2(n))_p\to(\oplus_{n\in\N}\ell_2(n))_q,\qquad (x_n)\mapsto (x_n),$$
  we define $T{(p,q)}= V\circ I'(p,q) \circ U$.
   $T(p,q)$  depends on the choice of the isomorphisms $U$ and $V$, nevertheless it is easy to see that for any other isomorphisms  $\tilde U:\ell_p\to (\oplus_{n\in\N}\ell_2(n))_p$ and
  $\tilde V:\ell_q\to(\oplus_{n\in\N}\ell_2(n))_q$, the operator   $\tilde T{(p,q)}= \tilde V\circ I'(p,q) \circ \tilde U$, factors through $T(p,q)$ and vice versa, and thus
    $\cJ^{T{(p,q)}}=\cJ^{\tilde T(p,q)}$.
   Clearly $T{(p,q)}\not\in\FSS$, and thus $\FSS $ is a proper closed subideal of $L(\ell_p,\ell_q)$. 
 
 It is clear that $\cJ^{T{(p,q)}}\subset \cJ^{\ell_2}$. Conversely,   Theorem 4.7 in
   \cite{SSTT} shows that every operator $S:\ell_p\to\ell_q$, which factors through $\ell_2$, belongs to $\cJ^{T{(p,q)}}$, thus
   we deduce that   $\cJ^{T_{(p,q)}}= \cJ^{\ell_2}$. Moreover, if $S\in L(\ell_p,\ell_q)$  is not in $\FSS$, it follows from Khintchine's theorem (for more detail see
   Theorem \ref{T:2.2} in Section \ref{S:3} and the remarks thereafter) that for some $c>0$ there are $c$-complemented subspaces $F_n\subset \ell_p$, which  are $c$-isomorphic 
     to $\ell_2(n) $,  for $n\in\N$, on which $S$ is a $c$-isomorphism.  After   perturbing  $S$ we can find a sequence $(k_n)\subset\N$, so that  if we write
     $\ell_p$ as   an $\ell_p$-sum of   $\ell_p(k_n)$ and $\ell_q$ as the $\ell_q$-sum of $\ell_q(k_n)$, we can assume that $F_n\subset \ell_p(k_n)\subset \ell_p$ and 
     $S(F_n)\subset  \ell_q(k_n)\subset \ell_q$. 
    From this (see  \cite[Theorem 4.13]{SSTT}) it follows  that $T(p,q)$ factors through $S$.
    We deduce therefore that the ideal $\cJ^{\ell_2}\vee \FSS=\cJ^{T(p,q)}\vee\FSS$ (the closed ideal generated by the elements
     of $\FSS$ and $\cJ^{\ell_2}$)  is the only successor of $\FSS$.
    
   Finally we need to construct an operator $U:\ell_p\to\ell_q$ which is in $\FSS$ but cannot be approximated by operators  which factor through $\ell_2$. This will show that
   $\FSS$ and $\cJ^{\ell_2}$ are incomparable, they both strictly contain     $\FSS\cap\cJ^{\ell_2}$ and are properly contained in $\cJ^{\ell_2}\vee \FSS$.
   
   To do that we write $\ell_p$ as $\ell_p$ sum of $\ell_p(2^n)$, $n\in\N$, and $\ell_q$  as  $\ell_q$-sum of  $\ell_q(2^n)$, $n\in\N$.
   For $n\in\N\cup\{0\}$ let $H_n$ be the { $n$-th Hadamard matrix}. This is an $2^n$ by $2^n$ matrix with entries which are either $1$ or $-1$, and can be defined by induction
   as follows; $H_0=(1)$, and assuming that $H_n$ has been defined one puts $H_{n+1}=\begin{pmatrix} H_n &H_n\\ H_n &-H_n \end{pmatrix} $.
 
 It is easy to see that $H_n$ as operator from $\ell_1(2^n)\to   \ell_\infty(2^n)$ is of norm $1$, and that $2^{-n/2}H_n$ is a  unitary matrix (i.e., an isometry on $\ell_2(2^n)$). It follows therefore from the
 Riesz Thorin Interpolation Theorem  (c.f. \cite{BL}) that  $U_n=2^{-n\frac{1}{\min(p',q)}}H_n$ is of norm at most 1 as an operator in $L(\ell_p(2^n),\ell_q(2^n))$.
 
 We define 
 $$U:\ell_p=\big(\oplus_{n=1}^\infty \ell_p(2^n)\big)_p\to \big(\oplus_{n=1}^\infty \ell_p(2^n)\big)_q,\qquad (x_n)\mapsto (U_n(x_n)).$$
 The fact that $U$ can not be approximated by operators which factor though $\ell_2$ can be obtained from the following Corollary of 
 Theorem 9.13  in   \cite{DJT} (see also \cite[Theorem]{SSTT}). 
   
   \begin{prop}\label{P:1.2}  {\rm cf. \cite[Corollary]{SSTT} } 
  Let $m\in\N$, $C>1$, and $r>1$, and
  assume that
  $V$ is an invertible $m$ by $m$ matrix.
  Let $\delta=\|V^{-1}\|_{L(\ell_r',\ell_{r'})}$.
  Then 
  $\|B\|_{L(\ell_p,\ell_r)}\cdot\|A\|_{L(\ell_r,\ell_q)}\ge \delta^{-1}$
  for any factorization $V=AB$.
  Moreover, if $\widetilde V$ is another $m$ by $m$ matrix with 
  \begin{equation}\label{approx}
    \|{\widetilde V- V}\|_{L(\ell_p,\ell_q)}\le
    \bigl(2\max\limits_{1\le i\le m}\|{V^{-1}e_i}\|_p\bigr)^{-1},
  \end{equation}
  then it follows that for any factorization $\widetilde V=AB$ we have
  $\|B\|_{L(\ell_p,\ell_r)}\cdot\|A\|_{L(\ell_r,\ell_q)}\ge (2\delta)^{-1}$.
\end{prop}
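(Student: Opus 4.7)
The proof splits naturally into two parts: the basic inequality for an exact factorization $V=AB$, and the stability under the perturbation $\widetilde V$.

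For the first part, I would use trace duality. Given a factorization $V=AB$ with $B\in L(\ell_p,\ell_r)$ and $A\in L(\ell_r,\ell_q)$, the cyclic property of the trace applied to $VV^{-1}=I_m$ gives
\[
m \;=\; \mathrm{tr}(I_m) \;=\; \mathrm{tr}(ABV^{-1}) \;=\; \mathrm{tr}(BV^{-1}A),
\]
so the finite rank operator $T:=BV^{-1}A:\ell_r\to\ell_r$ has trace exactly $m$. Theorem 9.13 of \cite{DJT} is the trace-duality theorem for operators factoring through $\ell_r$; applied to $T$ it bounds $|\mathrm{tr}(T)|$ by a product of the three norms $\|B\|_{L(\ell_p,\ell_r)}$, $\|V^{-1}\|_{L(\ell_{r'},\ell_{r'})}=\delta$ and $\|A\|_{L(\ell_r,\ell_q)}$, together with a dimensional factor of $m$ coming from the rank of $T$. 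Equating with $\mathrm{tr}(T)=m$ yields $\|A\|\|B\|\ge\delta^{-1}$ directly.

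For the perturbation assertion, the plan is to run the same trace argument with $\widetilde V$ in place of $V$. Given $\widetilde V=AB$, set $\widetilde T:=BV^{-1}A$. The cyclic-trace identity now gives
\[
\mathrm{tr}(\widetilde T)\;=\;\mathrm{tr}(\widetilde V V^{-1})\;=\;m\;+\;\mathrm{tr}\bigl((\widetilde V-V)V^{-1}\bigr),
\]
and the hypothesis on $\|\widetilde V-V\|_{L(\ell_p,\ell_q)}$ combined with $\|V^{-1}e_i\|_p\le\max_j\|V^{-1}e_j\|_p$ yields the column-wise estimate
\[
\bigl|\bigl((\widetilde V-V)V^{-1}e_i\bigr)_i\bigr|\;\le\;\|(\widetilde V-V)V^{-1}e_i\|_q\;\le\;\|\widetilde V-V\|_{L(\ell_p,\ell_q)}\cdot\|V^{-1}e_i\|_p\;\le\;\tfrac{1}{2},
\]
so $|\mathrm{tr}((\widetilde V-V)V^{-1})|\le m/2$ and hence $|\mathrm{tr}(\widetilde T)|\ge m/2$. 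Applying the trace-duality bound from the first part to $\widetilde T$ gives $|\mathrm{tr}(\widetilde T)|\le m\cdot\delta\cdot\|A\|\cdot\|B\|$, so $m/2\le m\delta\|A\|\|B\|$, i.e.\ $\|A\|\|B\|\ge(2\delta)^{-1}$.

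The main obstacle is extracting from Theorem 9.13 of \cite{DJT} the precise form of the trace-duality inequality that features the $\ell_{r'}\to\ell_{r'}$ norm of $V^{-1}$ rather than some other operator norm: this is where the duality $\ell_r^*=\ell_{r'}$ enters essentially, and the inequality is genuinely sharper than what one would obtain from, for instance, a naive $\ell_q\to\ell_p$ estimate on $V^{-1}$. By contrast, the perturbation step is then essentially bookkeeping: the norm on $\widetilde V-V$ was tailored precisely so that the diagonal correction to the trace is at most $m/2$.
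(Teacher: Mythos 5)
Your overall strategy --- reduce everything to the identity $\mathrm{tr}(V^{-1}AB)=\mathrm{tr}(V^{-1}V)=m$ and then dominate the trace by $m\,\delta\,\|A\|\,\|B\|$ --- is the right one (the paper itself only quotes this proposition from \cite{SSTT}, so I am judging your argument on its own), and your perturbation step is correct and complete: the diagonal entries of $(\widetilde V-V)V^{-1}$ are bounded by $\|(\widetilde V-V)V^{-1}e_i\|_q\le\|\widetilde V-V\|_{L(\ell_p,\ell_q)}\|V^{-1}e_i\|_p\le\frac12$, so the trace only drops to $m/2$ and the first part applies with an extra factor $2$. The problem is that the first part is never actually proved. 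The entire content of the proposition is the inequality $|\mathrm{tr}(BV^{-1}A)|\le m\,\|B\|_{L(\ell_p,\ell_r)}\,\|V^{-1}\|_{L(\ell_{r'},\ell_{r'})}\,\|A\|_{L(\ell_r,\ell_q)}$, and you assert that Theorem 9.13 of \cite{DJT} ``bounds $|\mathrm{tr}(T)|$ by a product of the three norms together with a dimensional factor of $m$ coming from the rank'' without deriving it --- indeed you yourself flag this extraction as ``the main obstacle.'' The obstacle is real: the generic bound $|\mathrm{tr}(T)|\le\mathrm{rank}(T)\cdot\|T\|$, which is what ``a factor of $m$ coming from the rank'' suggests, produces $\|V^{-1}\|_{L(\ell_q,\ell_p)}$ in place of $\|V^{-1}\|_{L(\ell_{r'},\ell_{r'})}$, a genuinely different estimate which is useless for the intended application. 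As written, the proof has a hole exactly where the proposition lives.

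The good news is that the missing inequality has an elementary proof, so no abstract trace-duality machinery is needed. Put $b_i=Be_i\in\ell_r$ and $a_j=A^*e_j\in\ell_{r'}$, so that $\|b_i\|_r\le\|B\|$, $\|a_j\|_{r'}\le\|A\|$, and
\begin{equation*}
m=\mathrm{tr}(V^{-1}AB)=\sum_{i,j}(V^{-1})_{ij}\,\la a_j,b_i\ra=\sum_{i=1}^m\la c_i,b_i\ra,\qquad \text{where } c_i:=\sum_{j=1}^m (V^{-1})_{ij}\,a_j .
\end{equation*}
For each fixed coordinate $k$ the vector $(c_i(k))_{i\le m}$ is the image of $(a_j(k))_{j\le m}$ under $V^{-1}$, so $\sum_i|c_i(k)|^{r'}\le\delta^{r'}\sum_j|a_j(k)|^{r'}$; summing over $k$ gives $\bigl(\sum_i\|c_i\|_{r'}^{r'}\bigr)^{1/r'}\le\delta\bigl(\sum_j\|a_j\|_{r'}^{r'}\bigr)^{1/r'}\le\delta\, m^{1/r'}\|A\|$. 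H\"older's inequality then yields
\begin{equation*}
m\le\sum_{i=1}^m\|c_i\|_{r'}\,\|b_i\|_r\le\Big(\sum_{i=1}^m\|c_i\|_{r'}^{r'}\Big)^{1/r'}\Big(\sum_{i=1}^m\|b_i\|_{r}^{r}\Big)^{1/r}\le \delta\, m^{1/r'}\|A\|\cdot m^{1/r}\|B\|=m\,\delta\,\|A\|\,\|B\|,
\end{equation*}
which is exactly the bound you wanted, and it applies verbatim to $\widetilde V=AB$ once you know $|\mathrm{tr}(V^{-1}\widetilde V)|\ge m/2$. With this derivation inserted, your argument is complete.
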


If $q\not=p'$ then it is easy to see that $U$ is finitely strictly singular. Indeed if $ p'<q$, it follows that $U_n= 2^{-n/p'} H_n$, and we deduce  again form the Riesz Thorin Interpolation Theorem
that $U_n$ is as operator between $\ell_p(2^n)$ and $\ell_{p'}(2^n)$  of norm not larger than 1, and thus $U\in L(\ell_p,\ell_{p'})$. But this implies that 
$U$ (as element in $L(\ell_{p},\ell_q)$) factors through $I(p',q)$, which is finitely  strictly singular by Proposition \ref{P:1.2}. A similar argument shows that
if $p'>q$, and thus $p<q'$,  then $U$ factors through $I(p,q')$.

The hard case is the case $q=p'\not=2$, in which the previous factorization argument does not work.
In this case it is better to see $\ell_p(n)$ as the space  $L_p(n)$, the space of all  $p$- integrable functions on $\{1,2\ldots n\}$ with the normalized counting measure
(i.e. $\|x\|_{L_p}=\frac{1}{n^{/1p}} \|x\|_p)$). Using interpolation between Schatten $p$-classes one can prove the following result 
\begin{thm}\label{T:1.3}{\rm \cite[Theorem 6.5]{SSTT}}
  Suppose that $T\colon L_p(N)\to\ell_{p'}(N)$. Let $E$ be a $k$-dimensional subspace of $L_p(N)$, and
  $C_1$, $C_2$, and $C_3$ be positive constants such that
  \begin{enumerate}
  \item $\|T\|_{L(L_2(N),\ell_2(N))}\le 1$ and
        $\|T\|_{L(L_1(N),\ell_\infty(N)))}\le 1$;
  \item $E$ is $C_1$-isomorphic to $\ell_2^k$;
  \item $F=T(E)$ is $C_2$-complemented in $\ell_{p'}^N$; and
  \item $T_{|E}$ is invertible and $\big\|{(T_{|E})^{-1}}\big\|\le C_3$.
  \end{enumerate}
  Then $k\le\bigl(C_1^3C_2C_3^2K_G^2\bigr)^{p'}$. Here $K_G$ denotes the Grothendieck 
  constant.
\end{thm}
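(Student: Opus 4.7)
The plan is to factor the identity of $\ell_2^k$ through $T$, compute its trace, and then bound that trace by a constant independent of $k$ using the two endpoint conditions on $T$ together with Grothendieck's inequality and complex interpolation between Schatten classes.

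First I would set up the factorization. Using (2), choose an isomorphism $U\colon \ell_2^k \to E \subset L_p(N)$, normalized so that $\|U\| \le C_1$ and $\|U^{-1}\| \le 1$. By (4), $T|_E\colon E \to F$ is an isomorphism with $\|(T|_E)^{-1}\| \le C_3$, and by (3) there is a projection $P\colon \ell_{p'}^N \to F$ with $\|P\| \le C_2$. Setting $W = U^{-1}\circ (T|_E)^{-1}\circ P\colon \ell_{p'}^N \to \ell_2^k$, one has $I_{\ell_2^k} = W\, T\, U$, so
\[
k \;=\; \mathrm{tr}(I_{\ell_2^k}) \;=\; \mathrm{tr}(W\, T\, U).
\]

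Next I would estimate this trace by a bilinear interpolation argument. Regard the trace as a bilinear pairing $\Phi_T(A, B) = \mathrm{tr}(A\, T\, B)$ with $A\colon \ell_{p'}^N \to \ell_2^k$ and $B\colon \ell_2^k \to L_p(N)$, and derive two endpoint estimates from (1). At the Hilbert endpoint $\|T\|_{L_2\to\ell_2} \le 1$ one gets $|\Phi_T(A,B)|$ controlled by Hilbert--Schmidt norms of $A$ and $B$ in the Hilbert structures on $\ell_2^N$ and $L_2(N)$. At the endpoint $\|T\|_{L_1\to \ell_\infty} \le 1$ the matrix of $T$ is entrywise small, and Grothendieck's inequality converts this entrywise bound into a bilinear bound in terms of $\ell_2$-valued systems extracted from $A$ and $B$, each invocation of Grothendieck (once on the $A$-side, once on the $B$-side) contributing a factor of $K_G$. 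Complex interpolation between the two endpoints (at parameter $\theta = 2/p'$, corresponding to $(L_2,L_1)\to L_p$ and $(\ell_2,\ell_\infty)\to \ell_{p'}$) then produces an intermediate estimate which, specialized to $A = W$ and $B = U$, bounds $\mathrm{tr}(WTU)$ by an expression involving $K_G^2$ and the intermediate-class norms of $W$ and $U$.

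Finally I would compute those intermediate norms in terms of $C_1, C_2, C_3$. Using that $E$ is $C_1$-isomorphic to $\ell_2^k$ and $F$ is $C_2$-complemented, the appropriate 2-summing/Hilbert--Schmidt norms of $U$ and $W$ are controlled by the necessary powers of $C_1, C_2, C_3$; combined with the interpolation exponent, which raises the whole estimate to the $p'$-th power, this yields $k \le (C_1^3 C_2 C_3^2 K_G^2)^{p'}$. The main obstacle will be the Schatten-class interpolation with Grothendieck applied at the $L_1\to\ell_\infty$ endpoint: one must verify that Grothendieck's inequality is applied separately on the $U$-side and the $W$-side so that it enters squared, and check that at the intermediate class the identifications of the norms of $U$ and $W$ really give the specific powers $C_1^3$ and $C_2 C_3^2$ demanded by the statement. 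This is a delicate but, in the Schatten-class framework, essentially standard interpolation computation.
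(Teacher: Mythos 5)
The paper itself does not prove Theorem \ref{T:1.3}; it is quoted verbatim from \cite[Theorem 6.5]{SSTT}, and the one-line description given there (``interpolation between Schatten $p$-classes'') matches the strategy you propose. Your setup is sound: normalizing $U\colon\ell_2^k\to E$ with $\|U\|\le C_1$, $\|U^{-1}\|\le 1$, putting $W=U^{-1}\circ(T|_E)^{-1}\circ P$ and observing $k=\mathrm{tr}(WTU)$ is correct, and your interpolation parameter $\theta=2/p'$ (as the weight on the Hilbert endpoint) is consistent with $(L_2,L_1)\to L_p$ and $(\ell_2,\ell_\infty)\to\ell_{p'}$.

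However, what you have written is a plan rather than a proof, and the gap sits exactly where you yourself flag ``the main obstacle.'' Every quantitative ingredient of the theorem is asserted, not verified. (a) The two endpoint estimates for $\Phi_T(A,B)=\mathrm{tr}(ATB)$ are never written down. At the $L_1(N)\to\ell_\infty(N)$ endpoint you must account for the normalized counting measure on $L_1(N)$: the entrywise bound on the matrix of $T$ carries a factor $N^{-1}$ that has to cancel against the summation over $N$ coordinates, and you must specify in which norms of $A$ and $B$ (sup of $\ell_2$-valued columns, $2$-summing norms, or Schatten norms) Grothendieck's inequality produces the bound, and exhibit concretely why it is invoked twice rather than once --- this is the only place $K_G^2$ can come from, and ``each invocation contributing a factor of $K_G$'' is not an argument. (b) Bilinear complex interpolation requires building analytic families $A_z$, $B_z$ (typically via polar decomposition of $U$ and $W$) whose boundary norms are controlled by $C_1$, $C_2C_3$ (note $\|W\|\le C_1\cdot\|U^{-1}\|\cdot C_3C_2$ in your normalization, so the split of the constants between the two sides is itself nontrivial), and one must then check that the interpolated norms at $\theta=2/p'$, together with the rank-$k$ Hilbert--Schmidt factors of order $\sqrt{k}$, yield an inequality of the form $k\le M\,k^{1-1/p'}$ with $M=C_1^3C_2C_3^2K_G^2$. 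That bookkeeping is the entire content of both the constant and the exponent $p'$; until it is carried out, your argument establishes neither the stated bound nor even that the bound has the form $k\le(\cdot)^{p'}$.
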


Now, if  $q=p'$, then we apply for $n\in\N$  Theorem \ref{T:1.3}  to $N=2^n$ and $T_n=\frac1{n^{1/p}} U_n=\frac1n H_n$ (note 
$T_n$ satisfies (1) of Theorem \ref{T:1.3} ).
  If $U$ where not finitely strictly singular, we could find constants  $C_1$, $C_2$ and $C_3$ and  for any $k\kin\N$ we would could find $n=n_k\kin\N$ large  enough so that 
(2) and (3) of Theorem \ref{T:1.3} are satisfied (using again Theorem \ref{T:2.2} in Section \ref{S:2}) for $T=T_n$. But this contradicts the conclusion of Theorem \ref{T:1.3}.

\section{Two  new closed ideals of  $L(\ell_p,\ell_q)$}\label{S:3}

We now state our main result, which exhibits two new closed subideals of $L(\ell_p,\ell_q)$,  and shows that $\cJ^{I(p,q)}\subsetneq \FSS\cap\cJ^{\ell_2}$ and increases the count of the known closed  proper and non trivial subideals 
 of $L(\ell_p,\ell_q)$ to 7.  
 \begin{thm}\label{T:2.1} Assume that $1<p<2<q<\infty$. Then  the two ideals $\cJ^{I(p,2)}$ and $\cJ^{I(2,q)}$ are two incomparable closed subideals of $\FSS\cap \cJ^{\ell_2}$.
\end{thm}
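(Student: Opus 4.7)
The plan is to first establish the containment directly and then reduce the incomparability question to a single non-inclusion by adjoint duality, finally exhibiting a distinguishing operator using a block-diagonal Hadamard-type construction.

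For the containment $\cJ^{I(p,2)}, \cJ^{I(2,q)} \subseteq \FSS \cap \cJ^{\ell_2}$, Proposition \ref{P:1.1} tells us that both $I(p,2)$ and $I(2,q)$ are finitely strictly singular, and each trivially factors through $\ell_2$ via the identity on $\ell_2$. Since $\FSS$ and $\cJ^{\ell_2}$ are closed two-sided ideals of $L(\ell_p,\ell_q)$, every operator of the form $A \circ I(p,2) \circ B$ or $A \circ I(2,q) \circ B$ lies in $\FSS \cap \cJ^{\ell_2}$, and taking norm closures preserves this.

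For the incomparability, I would first halve the work by duality. The adjoint $T \mapsto T^*$ is an isometric isomorphism $L(\ell_p,\ell_q) \to L(\ell_{q'},\ell_{p'})$, and using $I(p,2)^* = I(2,p')$ together with $I(2,q)^* = I(q',2)$ and $(ASB)^* = B^*S^*A^*$ one checks that
\[
\cJ^{I(p,2)}(\ell_p,\ell_q)^* = \cJ^{I(2,p')}(\ell_{q'},\ell_{p'}), \qquad
\cJ^{I(2,q)}(\ell_p,\ell_q)^* = \cJ^{I(q',2)}(\ell_{q'},\ell_{p'}).
\]
Since the pair $(q',p')$ again satisfies the hypothesis $1<q'<2<p'<\infty$, it suffices to prove $\cJ^{I(p,2)} \not\subseteq \cJ^{I(2,q)}$ for all admissible $(p,q)$: applying that conclusion to $(q',p')$ in place of $(p,q)$ and taking adjoints then yields $\cJ^{I(2,q)} \not\subseteq \cJ^{I(p,2)}$.

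To build a distinguishing $T_1 \in \cJ^{I(p,2)} \setminus \cJ^{I(2,q)}$, I would use a block-diagonal construction adapted to the decompositions $\ell_p \cong (\bigoplus_n \ell_p(k_n))_p$ and $\ell_q \cong (\bigoplus_n \ell_q(k_n))_q$ with block sizes $k_n = 2^n$. Define $T_1 = \bigoplus_n T_1^{(n)}$, with each $T_1^{(n)}: \ell_p(k_n) \to \ell_q(k_n)$ a suitably scaled Hadamard-type matrix chosen so that $T_1$ is bounded from $\ell_p$ to $\ell_q$ (using Riesz--Thorin estimates in the spirit of the operator $U$ discussed in Section \ref{S:2}). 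Membership in $\cJ^{I(p,2)}$ would follow by realising each block as a finite-dimensional version of $I(p,2)$ composed with an embedding of $\ell_2(k_n)$ into $\ell_q(k_n)$, then globalising through the sum structures to write $T_1 = A \circ I(p,2) \circ B$ for appropriate $A$ and $B$.

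The hard part is ruling out $T_1 \in \cJ^{I(2,q)}$. Suppose for contradiction $T_1 = \lim_k A_k \circ I(2,q) \circ B_k$ with $\sup_k \|A_k\|\cdot\|B_k\| < \infty$. Restricting to the $n$-th block via the coordinate projections, one obtains an approximate finite-dimensional factorisation of $T_1^{(n)}$ through $\ell_q$, with product of norms bounded independently of $n$. Applying Proposition \ref{P:1.2} (with $r=q$) to this restricted factorisation produces a lower bound $\|A\|\cdot\|B\| \ge \delta_n^{-1}$, where $\delta_n$ is the norm of the inverse of the Hadamard-block matrix as an operator on $\ell_{q'}$; by the Riesz--Thorin interpolation bounds on Hadamard matrices this quantity tends to zero as $k_n \to \infty$, contradicting the assumed uniform bound. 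The main obstacle is the delicate balancing between block sizes, Hadamard scaling, and the exponents $p,q$: one must arrange simultaneously that $T_1$ is bounded globally, that it genuinely decomposes through $I(p,2)$, and that the finite-dimensional restrictions feed into Proposition \ref{P:1.2} to yield a divergent obstruction against factorisation through $\ell_q$, exploiting the asymmetry between $1/p'$ and $1/q'$ that governs the Hadamard interpolation estimates.
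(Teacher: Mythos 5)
Your containment argument is fine, and your duality reduction is a genuinely nice observation: $T\mapsto T^{*}$ is a bijective isometry from $L(\ell_p,\ell_q)$ onto $L(\ell_{q'},\ell_{p'})$ carrying $\cJ^{I(p,2)}$ onto $\cJ^{I(2,p')}$ and $\cJ^{I(2,q)}$ onto $\cJ^{I(q',2)}$, so proving one non-inclusion for all admissible pairs does yield the other. (The paper instead runs the two symmetric halves explicitly.) The proof collapses, however, at the construction of the distinguishing operator.

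A block-diagonal Hadamard operator cannot lie in $\cJ^{I(p,2)}\setminus\cJ^{I(2,q)}$, for a structural reason. Write $N=2^{n}$ and $T_1^{(n)}=c_nH_n$. If $c_n2^{n/2}$ stays bounded by $C$, then $\|c_nH_n\|_{L(\ell_2(N),\ell_2(N))}=c_n2^{n/2}\le C$, hence $\|c_nH_n\|_{L(\ell_p(N),\ell_2(N))}\le C$ (since $\|x\|_2\le\|x\|_p$ for $p<2$), and $T_1^{(n)}$ factors blockwise through $I(2,q)$ as $\mathrm{id}_{\ell_q(N)}\circ I(2,q)\circ(c_nH_n)$ with uniformly bounded constants; these factorizations globalize, so $T_1\in\cJ^{I(2,q)}$ --- the opposite of what you need. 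If instead $c_n2^{n/2}\to\infty$, then Proposition \ref{P:1.2} with $r=2$, applied to $V=c_nH_n$ whose inverse $c_n^{-1}2^{-n}H_n$ has $\ell_2\to\ell_2$ norm $c_n^{-1}2^{-n/2}\to0$, shows that every factorization of $T_1^{(n)}$ through $\ell_2$ has product of norms at least $c_n2^{n/2}\to\infty$; since a factorization through $I(p,2)$ is in particular a factorization through $\ell_2$, this forces $T_1\notin\cJ^{\ell_2}\supseteq\cJ^{I(p,2)}$. So no scaling works: Hadamard blocks land either in both ideals or in neither. This is not an accident --- Proposition \ref{P:1.2} only detects obstructions to factoring through a whole space $\ell_r$, whereas here both ideals sit inside $\cJ^{\ell_2}$ and must be separated by something finer. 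Your specific invocation of Proposition \ref{P:1.2} with $r=q$ is moreover vacuous: every $V\in L(\ell_p,\ell_q)$ factors through $\ell_q$ as $\mathrm{id}\circ V$ with product of norms $\|V\|$, so the lower bound $\delta_n^{-1}$ can never exceed $\|T_1^{(n)}\|$ and cannot diverge for a bounded operator. What is actually needed (and what the paper does) is different in kind: using Theorem \ref{T:2.2} one places uniformly complemented copies of $\ell_2(n)$ inside blocks $\ell_p(k_n)$ and $\ell_q(k_n)$, builds $S\in\cJ^{I(2,q)}$ and $T\in\cJ^{I(p,2)}$ from them, and separates the ideals by averaging functionals $\Phi,\Psi$ that vanish on the opposite ideal; the vanishing rests on the combinatorial Lemma \ref{L:2.5} and Corollary \ref{C:2.6}, which show that an operator from a space with an upper $\ell_2$-estimate into one with a lower $\ell_p$-estimate, $p<2$, sends most basis vectors to vectors of small sup-norm, whence $I(p,2)\circ B$ nearly annihilates the chosen $\ell_2(n)$-bases on average. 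Nothing of that sort follows from interpolation bounds on Hadamard matrices.
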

We assume from now on that $1<p<2<q<\infty$.
It is clear that 
 $\cJ^{I(p,q)} \subset \cJ^{I(p,2)}$ and that by Proposition \ref{P:1.1}  $\cJ^{I(p,2)}\subset \FSS \cap \cJ^\ell_2$ and similarly 
  $\cJ^{I(p,q)} \subset \cJ^{I(2,q)} \subset\FSS \cap \cJ^{\ell_2}$. We can therefore extend the diagram of Section \ref{S:2} to the following  diagram.

\medskip
\noindent
\ \ \ \ \  {\footnotesize
\xymatrix@R=0pt@C=20pt{    
& &   \boxed{\cJ^{I(p,2)}} \ar@{-->}[dr] & & \boxed{\FSS} \ar@{=>}[dr] & &\\
\boxed{\cK}\ar @{=>}[r]&\boxed{\cJ}^{I(p,q)}\ar@{-->}[ur] \ar@{-->}[dr] & -\!\!\!\!\|&\boxed{\FSS\cap\cJ^{\ell_2}} \ar@{-->}[ur] \ar[dr] &-\!\!\!\!\| &
  \boxed{\FSS\vee\cJ^{\ell_2}} \ar@{.>}[r] & \boxed{L(\ell_p,\ell_q)}\\
& &   \boxed{\cJ^{I(2,q)}}\ar@{-->}[ur]   &  &\boxed{\cJ^{\ell_2}  } \ar@{-->}[ur] & &
}}\\
This solves Question (i) in \cite{SSTT} and  shows that $\cJ^{I(p,q)}$ is different from $\FSS\cap \cJ^{\ell_2}$,
and that  the two (different) closed subideals  $\cJ^{I(p,2)}$  and   $\cJ^{I(2,q))}$  lie between them.

In order to show Theorem \ref{T:2.1} we need to find two operators $T$ and $S$  in $\FSS\cap \cJ^{\ell_2}$ , so that 
$T\in \cJ^{I(p,2)}\setminus \cJ^{I(2,q)}$ and $S\in  \cJ^{I(2,q)}\setminus \cJ^{I(p,2)}$.
We will first need the following  result. 
 \begin{thm}\label{T:2.2}
  For every $1<r<\infty$ there exists a  constant $K=K(r)>0$ and for all
  $n\in\mathbb N$ a number  $N=N(n,r)\in \N$,
  such that every $N$--dimensional subspace $F\subset\ell_r$
  contains an $n$--dimensional subspace $E$ which is $K$--complemented
in $\ell_r$ and $K$--isomorphic to~$\ell_2(n)$.
\end{thm}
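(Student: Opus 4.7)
The plan is to prove the theorem by combining Dvoretzky's theorem with a complementation result for Hilbert subspaces of $L_r$--spaces. The outline has two steps, of which the second is the more delicate.

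For the first step, I apply Dvoretzky's theorem to $F$, regarded as an $N$--dimensional normed space in isolation. For $N=N(n,r)$ sufficiently large (the quantitative form due to Figiel--Lindenstrauss--Milman gives polynomial dependence of $N$ on $n$ for subspaces of $\ell_r$, though any growth rate would suffice here), this produces an $n$--dimensional subspace $E\subset F$ together with an isomorphism $U\colon\ell_2(n)\to E$ satisfying $\|U\|\cdot\|U^{-1}\|\le 2$.

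For the second step, I need to build a projection $P\colon\ell_r\to E$ whose norm is bounded by a constant depending only on $r$. When $r\ge 2$, the space $\ell_r$ has type $2$ with constant depending only on $r$, while $\ell_2(n)$ has cotype $2$ with absolute constant; Maurey's extension theorem then provides an extension $V\colon\ell_r\to\ell_2(n)$ of $U^{-1}$ whose norm is controlled in terms of $r$ and $\|U^{-1}\|$. The operator $P=U\circ V$ is then the desired projection onto $E$. When $1<r\le 2$, I run the dual argument, using that $\ell_r^{*}=\ell_{r'}$ has type $2$; alternatively, one can invoke Kwapie\'n's theorem on Hilbert subspaces of cotype--$2$ spaces directly.

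The main obstacle is keeping the projection norm bounded independently of $n$. This is precisely what Maurey's extension theorem guarantees, because the type--$2$ and cotype--$2$ constants entering its estimate depend only on $r$ for $\ell_r$ and are absolute for $\ell_2(n)$. Setting $K=\max\bigl(2, K_2(r)\bigr)$, where $K_2(r)$ is the resulting complementation bound, then completes the proof.
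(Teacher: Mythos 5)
Your argument is fine for $r\ge 2$, where $\ell_r$ has type $2$ (with constant depending only on $r$) and Maurey's extension theorem lets you extend $U^{-1}\colon E\to\ell_2(n)$ to all of $\ell_r$, yielding the projection $P=U\circ V$ with norm controlled by $r$ alone; this is a legitimate route, and in that range it even handles an \emph{arbitrary} Euclidean section produced by Dvoretzky. (The paper itself does not write out a proof: Remark \ref{R:1.2} derives the theorem from the finite-dimensional Khintchine theorem, or from applying Dvoretzky \emph{simultaneously} to $F$ and to $F^*$, or from the results of Figiel and Tomczak-Jaegermann. The reason for the "simultaneous" application is exactly the point where your argument breaks down below.)

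For $1<r<2$ there is a genuine gap. First, $\ell_r$ fails type $2$, so Maurey's theorem does not apply directly; "running the dual argument" turns the extension problem into a \emph{lifting} problem (one must lift $(U^{-1})^*\colon\ell_2(n)\to E^*$ through the quotient map $\ell_{r'}\to E^*$), which Maurey's extension theorem does not solve. Second, the fallback you invoke --- that Hilbertian subspaces of cotype-$2$ spaces are automatically complemented --- is false, and in fact fails precisely inside $\ell_r$ for $r<2$: the space $\ell_r(N)$ contains $2$-Euclidean subspaces of dimension $k$ proportional to $N$, whereas if such a subspace $E$ were $C$-complemented in $\ell_r(N)$, then $E^*$ would embed with constant $2C$ into $\ell_{r'}(N)$ with $r'>2$, forcing $k\le c(C,r)\,N^{2/r'}=o(N)$. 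Hence there are uniformly Hilbertian finite-dimensional subspaces of $\ell_r$ whose projection constants tend to infinity, and the section handed to you by a plain application of Dvoretzky's theorem in Step 1 may be exactly such a subspace, after which no Step 2 can succeed. The fix is to choose $E$ more carefully rather than to complement it a posteriori: either select a section that is almost Euclidean both in $F$ and in the dual/quotient sense (this is what applying Dvoretzky to $F$ and $F^*$ simultaneously accomplishes, and it is how one controls the projection), or follow the finite-dimensional Khintchine route of \cite[Theorem 6.28]{FHHMPZ}, which produces Rademacher-type vectors whose span is complemented by the (bounded, since $1<r<\infty$) orthogonal projection.
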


\begin{remark}\label{R:1.2} Theorem \ref{T:2.2} follows from  the finite dimensional version of Khintchin's Theorem
(see \cite[Theorem 6.28] {FHHMPZ}).
  Better estimates on $N(n,r)$ and $K(r)$ can be obtained by applying
 simultaneously  Dvoretzky's theorem both
  to a subspace $F\subset\ell_r$ and to its dual $F^*$ (see e.g.,
  \cite{MS}). This gives the result with $N = C n^{r/2}$ and $K
  = C'\sqrt {\max\{r, r'\}}$, where $C, C' >0$ are absolute constants.   This theorem can also be viewed, for example, as a special case of
  results in~\cite{FT}.
\end{remark}

\begin{proof}[Proof of Theorem \ref{T:2.1}]
 We will now construct the  operators 
  $T\in \cJ^{I(p,2)}\setminus \cJ^{I(2,q)}$ and $S\in  \cJ^{I(2,q)}\setminus \cJ^{I(p,2)}$.

Put $C=\max (K(p),K(q))$ and for $n\in\N$ let $k_n=\max(N(p,n),N(q,n))$,
 where  $K(p)$, $K(q)$, $N(p,n)$  and $N(q,n)$ are chosen  as in Theorem \ref{T:2.2}.
 Using   that result  we can find for every $n\kin\N$ a
 sequence   $(x_{(n,i)})_{i=1}^n$ in  $C B_{\ell_p({k_n})}$
   so that
\begin{align}\label{E:2.1.1}
   &(x_{(n,i)})_{i=1}^n  \text{ is  $ C$-equivalent to  the unit vector basis of $\ell_2(n)$ and}   \\
   &\text{there is a projection $P_n$ from $\ell_p(k_n)$ onto $\spa( x_{(n,i)}:i=1,2,\ldots n)$ with $\|P_n\|\le C$.}\label{E:2.1.2}
 \end{align}
 For $n\in\N$ we define  $I_n:\spa( x^{(n)}_i: i=1,2\ldots, n)\to \ell_2(n)$,  by 
$I_n(x_{(n,i)})=e_{(2,n,i)}$, $i=1,\ldots n$. $I_n$ is thus a $C$-isomorphism. 
Writing $\ell_p$ as $\ell_p$-sum of $\ell_p(k_n)$and $\ell_2$ as $\ell_2$-sum of $\ell_2(n)$, $n\in\N$, we define $\tilde S$ as follows
 \begin{equation*}
\tilde S: \big(\oplus_{n=1}^\infty \ell_p(k_n)\big)_p\to \big(\oplus_{n=1}^\infty\ell_2(n)\big)_2,\quad (x_n)\mapsto  \big(I_n\circ P_n(x_n):n\kin\N\big).
 \end{equation*}
It follows that $\|\tilde S\|\le C^2$. Finally we let $S:=I(2,q)\circ \tilde S\in \cJ^{I(2,q)}$.

The construction of $T:\ell_p\to\ell_q$ is similar. Using again Theorem \ref{T:2.2} we find for each $n\kin\N$
vectors $(y_{(n,i)}:i=1,2\ldots n)$ in $CB_{\ell_q(k_n)}$ so  that 
\begin{align}\label{E:2.1.3}
   &(y_{(n,i)})_{i=1}^n  \text{ is  $ C$-equivalent to  the unit vector basis of $\ell_2(n)$, and}   \\
   &\text{there is a projection $Q_n$ from $\ell_q(k_n)$ onto $\spa( y_{(n,i)}: i=1,2,\ldots n )$ with $\|Q_n\|\le C$.}\label{E:2.1.4}
 \end{align}
Let $J_n:\ell_2(n) \to \ell_q(k_n)$, be the linear map which assigns to $e_{(2,n,i)}$ the vector  $y_{(n,i)}$, $i=1,2\ldots n$, then $J_n$ is a $C$-isomorphism onto
its image, and by writing again $\ell_2$ as $\ell_2$-sum of  $\ell_2(n)$ and $\ell_q$ as $\ell_q$-sum of $\ell_q(k_n)$, $n\in\N$, we define $\tilde T$ as
$$ \tilde T: \big(\oplus_{n=1}^\infty\ell_2(n)\big)_2\to \big(\oplus_{n=1}^\infty \ell_q(k_n)\big)_q,\quad (x_n)\mapsto \big(J_n(x_n):n\kin\N\big).$$
   Thus $\tilde T$ is a bounded operator with $\|\tilde T\|\le C$ and $T:= \tilde T\circ I(p,2)\in \cJ^{I(p,2)}$.
   
   In order to show that  $S\not\in  \cJ^{I(p,2)}$ and $T\not\in\cJ^{I(2,q)}$ we will find
   two functionals $\Phi$ and $\Psi$ in $L^*(\ell_p,\ell_q)$ so that
   $\Phi(S)=1$ and $\Phi|_{\cJ^{I(p,2)}}\equiv 0$, and, conversely 
   $\Psi(T)=1$ and  $\Psi|_{\cJ^{I(2,q)}}\equiv 0$ .

   Let $q'$ be the conjugate of $q$ (i.e. $\frac1q+\frac1{q'}=1$).    For $n\in\N$ we define 
 $$\tilde\Phi_n: L(\ell_p(k_n) ,\ell_q(n))\to \R,\quad \text{ with }\tilde\Phi_n(V)=\frac1n \sum_{i=1}^n  \la e_{(q',n,i)} ,V(x_{(n_i)})\ra.$$
Since by choice  $\| x_{(n,i)}\|\le C$, for $i=1,\ldots, n$,
  it  follows that $\|\tilde \Phi_n\|\le C$. We extend $\tilde\Phi_n$ in the canonical
  way to a functional in  $L^*(\ell_p ,\ell_q)$, i.e 
  let
   $E_n:\ell_p(k_n)\to \ell_p=(\oplus_{n=1}^\infty \ell_p(k_n))$ be the canonical embedding to the $n$-component and 
  let   $F_n  :\ell_q=(\oplus_{j=1}^\infty \ell_q(j))\to \ell_q(n)$ be  the projection onto the $n$-th component, for $n\in\N$ and put
     $\Phi_n(U)=\tilde\Phi_n(F_n\circ U\circ E_n)$ for $U\in L(\ell_p,\ell_q)$.
     Then also $\|\Phi_n\|\le C$  and we let $\Phi\in L^*(\ell_p,\ell_q)$ be a w$^*$ accumulation point of the sequence $(\Phi_n)$ in $L^*(\ell_p,\ell_q)$.
  Since $F_n \circ S \circ E_n(x_{(n,i)}) $ is the $i$-th unit vector in $\ell_q(n)$ it follows that $\Phi(S)\!=\!\lim_{n\to \infty} \Phi_n(S)\!=\!1$.

  The definition of $\Psi\in L^*(\ell_p,\ell_q)$ is as follows.  
   Since $(y_{(n,i)}:i=1,2,\ldots,n)$ is $C$-isomorphic to 
   $(e_{(2,n,i)}:i=1,2\ldots, n)$ and its  linear span is  $C$-complemented in $\ell_q(k_n)$,    we can find a sequence 
 $(y^*_{(n,i)}: i=1,2\ldots,  n) \subset \ell_{q'}(k_n) $,  which is $C$-isomorphic to 
   $(e_{(2,n,i)}:i=1,2\ldots, n)$, and satisfies
  $\la y^*_{(n,i)},y_{(n,j)}\ra =\delta_{(i,j)}$ for $1\le i,j\le n$.

  For $n\kin\N$  we can then write the projection $Q_n:\ell_q(k_n)\to \spa(y_{(n,i)}:i=1,2,\ldots,n)$  (which was introduced  in 
   \eqref{E:2.1.4}) as 
   $$Q_n=\sum_{i=1}^n    y_{(n,i)}\otimes y^*_{(n,i)}: \ell_q(k_n)\to \spa(y_{(n,i)} :i=1,2,\ldots, n),\quad z\mapsto \sum_{i=1}^n    y_{(n,i)} \la y^*_{(n,i)}, z \ra.$$  
   Then  we define  for $n\in\N$
   $$\tilde \Psi_n: L(\ell_p(n),\ell_q(k_n))\to \R\qquad\text{ by }\tilde\Psi(U)=\frac1n\sum_{i=1}^n   \la y^*_{(n,i)},U(e_{(p,n.i)})\ra.$$
   We let $\Psi_n$ be the canonical extension of $\tilde \Psi$ to a functional in $L^*(\ell_p,\ell_q)$, i.e.  for $U\in L(\ell_p,\ell_q)$ we let 
   $\Psi_n(U) =  \tilde\Psi(F'_n\circ U\circ E'_n)$, where $E_n':\ell_p(n)\to \ell_p=\big(\oplus_{j\in\N} \ell_p(j)  \big)_p$, is the canonical embedding into the $n$-th component,
    and $F_n':\big(\oplus_{j\in\N} \ell_q(k_j)  \big)_q\to \ell_q(k_n)$ is the projection onto the $n$-th component.
    Since $\|y^*_{(n,i)}\|_{q'}\le C$, for $i=1,2\ldots n$, it follows   that   $\|\Psi_n\|\le C$ and we let $\Psi\in L^*(\ell_p,\ell_q)$ be a w$^*$-accumulation point of  $(\Psi_n)$. Since $T(e_{(p,n,i)})=y_{(n,i)}$ for $i=1,2\ldots, n $,
    it follows that $\Psi(T)=\lim_{n\to\infty} \la \Psi_n,T\ra=1$.
  
  It is left to show that $\cJ^{I(p,2)}\subset \text{ker}(\Phi)$ and that  $\cJ^{I(2,q)}\subset \text{ker}(\Psi)$. To do so, we need  a result  which  is of independent interest  and  will
  therefore  be stated   separately and more generally than needed.    
\end{proof}

\begin{defn} Let $X$ be a  finite or infinite dimensional
 Banach space with a normalized  basis $(e_i)$. If $X$ is infinite dimensional put for
 $j\in\N$, 
 \begin{align*}
&n_X(j)=\min\Big\{\Big\|\sum_{i\in I} e_i\Big\|: I\subset\N, \#I=j\Big\}, \text{ and }N_X(j)=\max\Big\{\Big\|\sum_{i\in I} e_i\Big\|: I\subset\N, \#I=j\Big\},
\intertext{and if $j\le \dim(X)<\infty$, then put}
 &n_X(j)=\min\Big\{\Big\|\sum_{i\in I} e_i\Big\|: I\subset\{1,2,\ldots \dim(X)\}, \#I=j\Big\}
 \text{ and }\\
 &N_X(j)=\max\Big\{\Big\|\sum_{i\in I} e_i\Big\|: I\subset\{1,2,\ldots \dim(X)\}, \#I=j\Big\}.
\end{align*}
\end{defn}

\begin{lem}\label{L:2.5} Assume that $E$ and $F$ are two finite dimensional spaces, both 
having $C_u$-unconditional and normalized  bases $(e_i:i=1,2\ldots m)$ and $(f_j:j=1,\ldots n)$,
respectively.

Assume further that there are  $1<t<s<\infty$  and positive  constants $c_1$,  and $c_2$,  so that for all $\ell\in\N$
\begin{equation}\label{E:2.5.0}
N_E(\ell)\le c_1\ell^{1/s} \text{ and } n_F(\ell)\ge   c_2\ell^{1/t}.
\end{equation}
Then there exists a number  $c>0$, depending only on $s$, $t$, $c_u$, $c_1$,  and $c_2$, so that for 
every linear operator $T:E\to F$ and any $\rho>0$
\begin{equation}\label{E:2.5.0a}
\big|\big\{ i\le m: ||T(e_i)||_\infty=\max_{j\le n} |f^*_j(T(e_i))|\ge \|T\| \rho\big\}\big|\le  c  \rho^{\frac{-s^2}{(s-1)(s-t)}},
\end{equation}
where $(f_j^*)$ are the coordinate functionals to $(f_j)$.
Moreover, if $c_u=c_1=c_2=1$, then  we can choose $c=1$.
\end{lem}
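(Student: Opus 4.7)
The plan is to bound $|A| := |\{i\le m : \|T(e_i)\|_\infty \ge \|T\|\rho\}|$ by forming a sign-weighted combination $v = \sum_{i\in A}\epsilon_i e_i$, pushing $v$ through $T$, and sandwiching $\|Tv\|_F$ between the upper estimate on $E$ and the lower estimate on $F$. After normalizing $\|T\|=1$, for each $i\in A$ I fix $j(i)\le n$ with $|t_{j(i),i}| := |f_{j(i)}^*(T(e_i))| \ge \rho$, and set $J = \{j(i) : i\in A\}$ and $A_j=\{i\in A : j(i)=j\}$, so $|A| = \sum_{j\in J}|A_j|$.

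The proof splits naturally into two pieces. First, a \emph{single-$j$ (diagonal) estimate}: fixing $j\in J$ and taking $v_j = \sum_{i\in A_j}\sign(t_{j,i})\,e_i$, unconditionality and the upper estimate on $E$ give $\|v_j\|_E \le C_u c_1 |A_j|^{1/s}$, while $f_j^*(Tv_j) = \sum_{i\in A_j}|t_{j,i}| \ge |A_j|\rho$ combined with $|f_j^*(Tv_j)| \le C_u\|Tv_j\|_F \le C_u\|v_j\|_E$ yields $|A_j|\le (C_u^2 c_1)^{s'}\rho^{-s'}$, where $s'=s/(s-1)$. Second, a \emph{Rademacher/Khintchine estimate on $|J|$}: picking one representative $i_j\in A_j$ per $j\in J$ and setting $w=\sum_{j\in J}\epsilon_j e_{i_j}$ with Rademacher signs, each coordinate $\xi_{j_0} = \sum_{j}\epsilon_j t_{j_0,i_j}$ of $Tw$ is a Rademacher sum whose second moment $\E_\epsilon \xi_{j_0}^2$ is $\ge t_{j_0,i_{j_0}}^2 \ge \rho^2$, so by Khintchine and Paley--Zygmund $\P(|\xi_{j_0}|\ge\rho/2)\ge c_0$ for an absolute $c_0>0$. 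Averaging produces a sign vector with $|\{j_0 : |\xi_{j_0}|\ge\rho/2\}|\ge c_0|J|$, which combined with the lower estimate on $F$ and unconditionality yields $\|Tw\|_F \ge C'\rho|J|^{1/t}$; against $\|Tw\|_F \le \|w\|_E \le C_u c_1|J|^{1/s}$ this gives $|J|\le C''\rho^{-st/(s-t)}$.

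Multiplying the two estimates produces $|A| \le |J|\cdot\max_j|A_j| \le c\rho^{-(s' + st/(s-t))}$, which is not yet the stated exponent $s^2/((s-1)(s-t))$; the main obstacle is to sharpen this into a single unified bound. The sharpening I expect runs the random-sign argument directly on $v = \sum_{i\in A}\epsilon_i e_i$ rather than only on the representatives: the coefficient $b_j = \sum_{i\in A}\epsilon_i t_{j,i}$ then has $\E b_j^2 \ge |A_j|\rho^2$, so the Paley--Zygmund threshold may be raised to $\alpha_j = \rho\sqrt{|A_j|}/2$. Decomposing $J$ along the level sets of $|A_j|$, feeding each level into the weak-$\ell_t$ bound $\|Tv\|_F \ge c_2 C_u^{-2}\sup_\alpha \alpha|\{j : |b_j|\ge\alpha\}|^{1/t}$ supplied by $n_F(\ell)\ge c_2\ell^{1/t}$ and unconditionality, and optimizing the level threshold using the constraint $|A|=\sum_j|A_j|$ together with the diagonal cap $|A_j|\le (C_u^2 c_1)^{s'}\rho^{-s'}$, should produce the exponent $s^2/((s-1)(s-t))$. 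The "moreover" statement follows because in the case $C_u=c_1=c_2=1$ every multiplicative constant entering this argument—namely the ones coming from unconditionality and the two basis estimates—can be tracked to $1$, leaving $c=1$.
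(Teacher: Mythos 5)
Your setup and your first two estimates are sound and follow the same skeleton as the paper: the ``diagonal'' bound $|A_j|\le (C_u^2c_1)^{s/(s-1)}\rho^{-s/(s-1)}$ is exactly the paper's first step, and a random-sign lower bound on the image norm is indeed the second ingredient. The genuine gap is in how the two pieces are combined. You correctly notice that $|A|\le |J|\cdot\max_j|A_j|$ gives the exponent $\frac{st}{s-t}+\frac{s}{s-1}$ rather than $\frac{s^2}{(s-1)(s-t)}$, but your proposed repair (Paley--Zygmund at the elevated thresholds $\rho\sqrt{|A_j|}/2$ plus a decomposition of $J$ along level sets of $|A_j|$) is only a sketch ending in ``should produce'': a dyadic level decomposition costs a logarithmic factor, the thresholds vary with $j$ so the weak-$\ell_t$ bound needs a single level, and one sign choice must work for all levels at once, so it is far from clear this closes. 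The paper's route is different and simpler at exactly this point: it applies random signs to \emph{all} of $A$, bounds the left-hand side by $N_E(|A|)\le |A|^{1/s}$ (not $N_E(|J|)$), pulls the expectation inside the norm by Jensen's inequality, and uses the loss-free identity $\E\big|\theta r+X\big|\ge|\theta|$ (from $|a+b|+|a-b|\ge 2|a|$, $X$ independent of the sign $r$) to get $\E\big|\sum_{i\in A}r_i\beta(i,j)\big|\ge\rho$ for every $j\in\tilde A=J$. This yields the asymmetric inequality $|A|^{1/s}\ge\rho\, n_F(|\tilde A|)$, which together with $|\tilde A|\ge |A|\rho^{s/(s-1)}$ from the diagonal step finishes in one stroke. (As an aside: the paper's final display drops the factor $\rho$ in front of $n_F(|\tilde A|)$, so the exponent it actually proves, like yours, differs from $\frac{s^2}{(s-1)(s-t)}$ except at $s=2$; only positivity of the exponent is used in Corollary~\ref{C:2.6}, so neither discrepancy matters for the application, but you should not expect your sharpening to be forced.)

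A second, concrete defect is the ``moreover'' clause. You assert that when $C_u=c_1=c_2=1$ all constants ``can be tracked to $1$,'' but your argument contains constants that do not come from unconditionality or the basis estimates: Khintchine plus Paley--Zygmund give $\P(|\xi_{j_0}|\ge\rho/2)\ge c_0$ only for some absolute $c_0<1$ and only at threshold $\rho/2$, so factors of $2$ and $c_0^{-1/t}$ survive normalization. The Jensen-plus-$|a+b|+|a-b|\ge2|a|$ step is exactly what lets the paper claim $c=1$ in the normalized case; replacing it by Paley--Zygmund forfeits that claim.
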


\begin{cor}\label{C:2.6} Under the assumptions of Lemma \ref{L:2.5}, it follows that
\begin{equation}\label{E:2.6.1} 
\frac1m\sum_{i=1}^m \|T(e_i)\|_\infty \le \|T\|(1+c) m^{-r(s,t)},\text{ where $r(s,t)\!=\!\frac{(s-1)(s-t)}{(s-1)(s-t)+s^2}$, for $s\!>\!t\!\ge\!1$.}
\end{equation}
\end{cor}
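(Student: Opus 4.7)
The plan is a routine distribution-function (layer-cake) argument applied directly to the cardinality estimate supplied by Lemma \ref{L:2.5}. By homogeneity I would normalize so that $\|T\|=1$, and abbreviate $\alpha := s^2/((s-1)(s-t))$, so that Lemma \ref{L:2.5} reads
\[
N(\rho) := \big|\{i\le m : \|T(e_i)\|_\infty \ge \rho\}\big| \le c\,\rho^{-\alpha} \qquad \text{for every } \rho>0.
\]
A short algebraic check shows that the target exponent satisfies $r(s,t) = 1/(\alpha+1)$, equivalently $r(s,t)\cdot\alpha = 1-r(s,t)$. This identity is precisely what makes the optimal balance below land on the advertised constant $1+c$.

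Splitting the index set at level $\rho$ gives
\[
\sum_{i=1}^m \|T(e_i)\|_\infty \;\le\; m\rho \;+\; \Big(\max_{i\le m}\|T(e_i)\|_\infty\Big)\cdot N(\rho) \;\le\; m\rho + c\,\rho^{-\alpha},
\]
where the trivial pointwise bound $\|T(e_i)\|_\infty \le c_u\|T\| = c_u$ (which follows from $|f_j^*(y)|\le c_u\|y\|$ for a normalized $C_u$-unconditional basis) has been absorbed into the constant $c$ supplied by Lemma \ref{L:2.5}, since $c$ there already depends on $c_u$. Choosing $\rho := m^{-r(s,t)}$, the identity $r(s,t)\cdot \alpha = 1-r(s,t)$ forces $m\rho = m^{1-r(s,t)}$ and $c\rho^{-\alpha} = c\,m^{r(s,t)\alpha} = c\,m^{1-r(s,t)}$, so the total is at most $(1+c)\,m^{1-r(s,t)}$. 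Dividing by $m$ and reinstating the $\|T\|$ factor yields the inequality \eqref{E:2.6.1}.

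I do not expect any serious obstacle: all of the nontrivial content lives in Lemma \ref{L:2.5}, and the corollary is a one-line optimization read off from it. The only point that needs any care is recognizing that $\rho = m^{-r(s,t)}$ is exactly the level that balances the two pieces of the decomposition, so that the two contributions contribute $m^{1-r(s,t)}$ and $c\,m^{1-r(s,t)}$ respectively and combine into the clean constant $1+c$.
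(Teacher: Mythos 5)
Your proposal is correct and follows essentially the same route as the paper: split the sum at the threshold $\rho\|T\|$, bound the small terms by $\rho\|T\|$ each and the large ones via the cardinality estimate of Lemma \ref{L:2.5}, then balance with $\rho=m^{-r(s,t)}$. The only cosmetic differences are your normalization $\|T\|=1$ and your explicit remark on bounding $\|T(e_i)\|_\infty$ by $c_u\|T\|$, a point the paper passes over silently in the same spirit.
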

\begin{proof}
First note that for any $\rho>0$ Lemma \ref{L:2.5}  yields
\begin{align*}\label{E:2.6.1}
\frac1m\sum_{i=1}^m \|T(e_i)\|_\infty &=\frac1m\sum_{i=1,   \|T(e_i)\|_\infty\le \rho\|T\|}^m \|T(e_i)\|_\infty+ \frac1m\sum_{i=1,   \|T(e_i)\|_\infty> \rho\|T\|}^m \|T(e_i)\|_\infty \\
 &\le  \|T\|\rho+  c \|T\|\frac{\rho^{\frac{-s^2}{(s-1)(s-t)}}}m.
\end{align*}
Then we let 
$$\rho=m^{-\frac{(s-1)(s-t)}{(s-1)(s-t)+s^2}},$$
which implies that 
\begin{align*}
\frac1m\sum_{i=1}^m \|T(e_i)\|_\infty &\le  \|T\|m^{-\frac{(s-1)(s-t)}{(s-1)(s-t)+s^2}}+ c\|T\| m^{-1}m^{\frac{(s-1)(s-t)}{(s-1)(s-t)+s^2} \frac{s^2}{(s-1)(s-t)}}\\
  &=  \|T\|m^{-\frac{(s-1)(s-t)}{(s-1)(s-t)+s^2}}+c\|T\|m^{-\frac{(s-1)(s-t)}{(s-1)(s-t)+s^2}}=(1+c)\|T\|m^{-r(s,t)}.
\end{align*}
\end{proof}

\begin{proof}[Proof of Lemma \ref{L:2.5}] For the sake of a better readability  we will assume that $c_1=c_2=c_u=1$. The general case follows in the same way. We can also assume that $\|T\|=1$.

Let $T: E\to F$ and write  $y_i=T(e_i)$ as  $y_i=\sum_{j=1}^n \beta(i,j) f_j$. 
 Let 
 $\rho>0$ and put 
 $$A=A_\rho=\big\{i\in\{1,2,\ldots,m\} : \max|\beta(i,j)|\ge \rho\big\}.$$ 
 For $i\in A$ choose $j_i\in\{1,2\ldots, n\}$ so that  $|\beta(i,j_i)|\ge \rho$.
 Let $\tilde A=\{j_i:i\in A\}$ and 
 for $j\in\tilde A$ let $A_j=\{i\in A: j_i=j\}$. In order to estimate $|A_j|$ and then $\tilde A$ we compute
 \begin{align*} 
|A_j|^{1/s}&\ge  N_E(|A_j|)    \quad\text{ (By \eqref{E:2.5.0})}\\
   &\ge \Big\|\sum_{i\in A_j}\sign(\beta(i,j))  e_j    \Big\|_E\\
                   &\ge \Big\|T\big(\sum_{i\in A_j} \sign(\beta(i,j)) e_j  \Big)  \Big\|_F \quad\text{ (Since $\|T\|=1$)}\\
                   &\ge \Big\la f^*_j, \sum_{i\in A_j} T\Big(\sum_{i\in A_j} \sign(\beta(i,j)) e_j  \Big) \Big\ra\\
                   &= \sum_{i\in A_j} |\beta(i,j)|
                   \ge |A_j|\rho \end{align*}
which yields  
$|A_j|^{1- \frac1s} \le \rho^{-1},$
and thus
$$|A_j|\le \rho^{-1/(1-\frac1s)}= \rho^{-\frac{s}{s-1}}.$$
Since $|A|=\sum_{j\in\tilde A} |A_j|\le |\tilde A|\cdot  \rho^{-\frac{s}{s-1}}$, we obtain
\begin{equation}\label{E:2.5.1}
|\tilde A|\ge |A|  \rho^{\frac{s}{s-1}}.
\end{equation}
Let $(r_j)_{j=1}^m$ be a Rademacher sequence on some probability space $(\Omega,\Sigma,\P)$,  this means that
$r_1,r_2,\ldots r_m$ are independent and  $\{\pm1\}$-valued, with $\P(\{r_j=1\})=\P(\{r_j=-1\})=1/2$  for $j=1,2\ldots n$.
We compute 
\begin{align*}
 |A|^{1/s}&\ge N_E(|A|)          \quad\text{ (By \eqref{E:2.5.0})}\\
            &\ge \E\Big(\Big\|\sum_{i\in A} r_i e_i \Big\|_E\Big)\\
             &\ge  \E\Big(\Big\|\sum_{i\in A} \sum_{j=1}^n r_i \beta(i,j)  f_j\Big\|_F\Big)\quad\text{ (Since $\|T\| \le 1$)}\\
            &=\E\Big(\Big\|\sum_{j=1}^n f_j \Big|\sum_{i\in A} r_i \beta(i,j)\Big| \Big\|_F\Big)\quad  \text{ (By $1$-unconditionality of $(f_j)$).} \\
           \intertext{ Applying the multidimensional version  of  Jensen's inequality (c.f 
              \cite[10.2.6, page 348]{Du})  to the convex function
             $\R^n\ni z\to \|\sum_{j=1}^n z_j f_j\|_F$ and the $\R^n$ valued random vector 
             $Z=\big(|\sum_{i\in A} r_i \beta(i,j)|:j\le n\big))$ we obtain }
          |A|^{1/s}     &\ge \Big\|\sum_{j=1}^n f_j \E\Big(\Big|\sum_{i\in A} r_i \beta(i,j)\Big| \Big)\Big\|_F   \\  
              &\ge \Big\|\sum_{j\in \tilde A} f_j \E\Big(\Big|\sum_{i\in A} r_i \beta(i,j)\Big| \Big)\Big\|_F \quad             
              \text{ (By 1-uncondtionality of $(f_j)$).}
\intertext{
 For each $j\in\tilde A$ there is an $i_j\in A$ so that $|\beta(i,j)|\ge \rho$. Let $r$ be
    anther  $\pm1$ random variable with $\P(r=1)=\P(r=-1)=1/2$, which is
   independent to $(r_j:j=1,\ldots m)$ then
  \begin{align*}\E\Big(\Big|\sum_{i\in A} r_i \beta(i,j)\Big| \Big)&=
   \E\Big(\Big| r_{i_j} \beta(i_j,j) +   r\sum_{i\in A\setminus\{i_j\}}  r_i \beta(i,j)\Big| \Big)\\
  &= \E\Bigg(\frac12\Big| r_{i_j} \beta(i_j,j) +\!\!  \sum_{i\in A\setminus\{i_j\}}  r_i \beta(i,j)\Big|
   + \frac12\Big| r_{i_j} \beta(i_j,j) - \!\!  \sum_{i\in A\setminus\{i_j\}} r_i \beta(i,j)\Big|\Bigg)\\
   &\ge  \E( | r_{i_j} \beta(i_j,j)|) \  \ge \rho\ quad \text{ (Since $|a+b| +|a-b|\ge 2|a|$}).
 \end{align*}  
   Using again the 1-unconditionality  of $(f_j:j=1,2\ldots n)$ we deduce therefore  that}
   |A^{1/s}|&\ge \Big\|\sum_{j\in \tilde A} f_j \E\Big(\Big|\sum_{i\in A} r_i \beta(i,j)\Big| \Big)\Big\|_F \ge \rho \Big\| \sum_{j\in \tilde A} f_j\Big\|_F \ge n_F(|\tilde A|),
   \end{align*}
   and thus by our  assumption
  \eqref{E:2.5.0} and by   \eqref{E:2.5.1} we obtain 
 $$ |A|^{1/s}\ge n_F(|\tilde A|)\ge |\tilde A|^{1/t} \ge  |A|^{1/t}  \rho^{\frac{s}{ts-t}} .$$
  Solving for $|A|$ yields
  $$|A|\le \rho^{- \frac{s}{ts-t}\frac{st}{s-t}}=\rho^{\frac{-s^2}{(s-1)(s-t)}}, $$
  which proves our claim.
\end{proof}
 \begin{proof}[Continuation of Proof of Theorem \ref{T:2.1}]   
 In order to show that $\cJ^{I(p,2)}\subset \text{ker}(\Phi)$ we let $A\in L_2(\ell_2, \ell_q)$  and $B\in L(\ell_p)$. We need to show that
 $\Phi(A\circ I(p,2)\circ B)=0$. W.lo.g. we assume that $\|A\|,\|B\|\le 1$.
 
 Consider $B'_n:\ell_2(n)\to \ell_p(k_n)$ with $B'(e_{(2,n,i)})=B(x_{(n,i)})$, where we consider $\ell_p(k_n)$ canonically embedded  into $\ell_p=\big(\oplus_{j=1}^\infty \ell(k_j) \big)$. Then $\|B_n'\|\le C$ and applying therefore Corollary \ref{C:2.6} to $B'$, $s=2$ and $t=p$, we obtain   
 $$
 \frac1n\sum_{i=1}^n \|B(x_{(n,i)})\|_\infty = \frac1n\sum_{i=1}^n \|B_n'(e_{(2,n,i)})\|_\infty  \le 2Cn^{-r(2,p)}.
$$
which by the concavity of  $[0,\infty)\ni\xi\mapsto \xi^{(2-p)/2}$ implies that
 \begin{equation}\label{E:2.1.7}
 \frac1n\sum_{i=1}^n \|B(x_{(n,i)})\|_\infty^{(2-p)/2} \le 
 \Big( \frac1n\sum_{i=1}^n \|B(x_{(n,i)})\|_\infty \Big)^{(2-p)/2}
  \le (2C)^{(2-p)/2}n^{-r(2,p)(2-p)/2}.
 \end{equation}
 Secondly we observe that for any $i=1,2\ldots n$
\begin{align}\label{E:2.1.8}
\|I(p,2)(B(x_{(n,i)}))\|_2&=\Big(\sum_{j=1}^{k_n} |B(x_{(n,i)})(j)|^2\Big)^{1/2} \\
 &=\Big(\sum_{j=1}^{k_n} |B(x_{(n,i)})(j)|^{p} |B(x_{(n,i)})(j)|^{2-p}\Big)^{1/2} \notag\\
 &\le  \|B(x_{(n,i)})\|_{\infty}^{(2-p)/2} \cdot  \|B(x_{(n,i)})\|_p^{p/2}\le  C^{p/2} \| B(x_{(n,i)})\|_{\infty}^{(2-p)/2}. \notag
 \end{align}
 It follows therefore that
 \begin{align*}
\big| \Phi_n(A\circ I(p,2)\circ B)\Big|&= \frac1n\Big| \sum_{i=1}^n \la e_{(q',n,i)}, A\circ I_{(p,2)}\circ B(x_{(n,i)})   \ra \Big|\\
                                              &= \frac1n\Big| \sum_{i=1}^n  \la A^*(e_{(q',n,i)}), I_{(p,2)}\circ B(x_{(n,i)})   \ra \Big|\\
                                              &\le  \frac1n\sum_{i=1}^n  \| A^*(e_{(q',n,i)})\|_2 \| I_{(p,2)}\circ B(x_{(n,i)}\|_{2}  \\
                                              &\le \|A^*\| C^{p/2} \frac1n \sum_{i=1}^n  \| x_{(n,i)}\|_{\infty}^{(2-p)/2}\quad \text{ (By \eqref{E:2.1.8})}  \\ 
                                               &\le C^{p/2}(2C)^{(2-p)/2} n^{-r(2,p)(2-p)/2}\to_{n\to\infty} 0\quad \text{  (By \eqref{E:2.1.7})}.
 \end{align*}
 This implies that $\cJ^{I(p,2)}\subset \text{ker}(\Phi)$.
 
  In order to show that $\cJ^{I(2,q)}\subset \text{ker}(\Psi)$,  let $B\in L(\ell_p,\ell_2)$ and $A\in L(\ell_q)$
  with $\|B\|,\|A\|\le 1$. We need to show that
  $\Psi(A\circ I_{(2,q)}\circ B)=0$.

Let $A'_n:\ell_2(n)\to \ell_{q'}(k_n)$, defined by $A_n'(e^{(2,n,i)})=A^*(y^*_{(n,i)})$, $i=1,2\ldots n$
 (we consider $\ell_{q'}(k_n)$ in the canonical way  as subspace 
  of $\ell_{q'}=(\oplus_{j=1}^\infty\ell_{q'}(k_n))_q$).
 It follows from the choice of $(y^*_{(n,i)}:i=1,2\ldots n)$ that 
 $\|A'_n\|\le C$ and from Corollary \ref{C:2.6}  (with $s=2$ and $t=q'$) we deduce  that
 $$ \frac1n\sum_{i=1}^n \|A^*(y^*_{(n,i)})\|_{\infty} 
 = \frac1n\sum_{i=1}^n \|A'(e_{(2,n,i)})\|_{\infty} 
 \le 2Cn^{-r(2,q')}.$$
 Using the concavity of the function $[0,\infty)\ni\xi\to \xi^{(2-q')/2}$ we deduce
 \begin{equation}\label{E:2.1.9a}
  \frac1n\sum_{i=1}^n \|A^*(y^*_{(n,i)})\|_\infty^{(2-q')/2}=    
    \Big(\frac1n\sum_{i=1}^n \|A^*(y^*_{(n,i)})\|_\infty\Big)^{(2-q')/2}
    \le  (2C)^{(2-q')/2}n^{-r(2,q')(2-q')/2}.
 \end{equation}
 It is easy to see that $I_{(q',2)}$ is  the adjoint of $I_{(2,q)}$ and we compute for $i=1,2\ldots n$
\begin{align}\label{E:2.1.9}
\|I(q',2)\circ A^*(y^*(n,i))\|_2&=
 \Big(\sum_{j=1}^{k_n} \big(A^*(y^*(n,i))(j)\big)^2\Big)^{1/2}\\
&= \Big(\sum_{j=1}^{k_n} \big|A^*(y^*(n,i))(j)\big|^{q'}   \big|A^*(y^*(n,i))(j)\big|^{2-q'}\Big)^{1/2} \notag\\
 &\le  \|A^*(y^*{(n,i)})\|_\infty^{(2-q')/2} \|y_{(n,i)}\|_{q'}^{q'/2}  \notag \\
 &\le C^{q'/2}\|A^*(y^*{(n,i)})\|_\infty^{(2-q')/2} .\notag
 \end{align}
 Therefore it follows 
\begin{align*}
|\la \psi_n,  U\ra|
 &=\frac1n\Big|\sum_{i=1}^n \la  A\circ I_{(2,q)}\circ B(e_{(p,n,i)}), y^*{(n,i)}\ra\Big|\\
&=\frac1n\Big|\sum_{i=1}^n \la   B( e_{(p,n,i)}), I_{(q',2)}\circ A^* (y^*{(n,i)}  ) \ra\Big|\\
&\le \frac1n\sum_{i=1}^n \|  B( e_{(p,n,i)})\|_2\cdot \|  I_{(q',2)}\circ A^*( y^*_{(n,i)})  \|_2\\
&\le \|B\| C^{q'/2}\frac1n\sum_{i=1}^n
\|A^*(y^*{(n,i)})\|_\infty^{(2-q')/2}
 \text{\ \  (By \eqref{E:2.1.9})}\\
&\le  C^{q'/2}  (C+1)^{(2-q')/2}n^{-r(2,q')(2-q')/2}\to_{n\to\infty} 0
\text{\ \ (By \eqref{E:2.1.9a})}.
\end{align*}
which implies our claim, and finishes the proof or Theorem \ref{T:2.1}.
 \end{proof}


\begin{thebibliography}{21}

\bibitem{AS1}  G.~Androulakis and Th. Schlumprecht,
\newblock {\em  Strictly singular, non-compact operators exist on the space of Gowers and Maurey,}
\newblock J. London Math. Soc.
{\bf  64}  (2001), no. 3, 65 --674.

\bibitem{AS2}  G.~Androulakis and Th.~ Schlumprecht, {\em The Banach space S is complementably minimal and subsequentially prime,} Studia Math. 
{\bf 156}  (2003), no. 3, 22 --242. 


\bibitem{AH} S.A. Argyros and R.~Haydon, {\em A hereditarily indecomposable $L_\infty$-space that solves the scalar-plus-compact,}  preprint, arXiv 0903.3921.

\bibitem{BL}
J.~Bergh and J.~L\" ofstr\" om,
\newblock {\em Interpolation spaces. An introduction.}
\newblock Grundlehren der Mathematischen Wissenschaften, No.~223.  
Springer-Verlag, Berlin-New York, 1976.

\bibitem{BJL} A.~Bird, G.~J.~O.~Jameson  and N.-J.~Laustsen, {\em 
 The GiesyÐJames theorem for general index $p$,
with an application to closed ideals of operators
on the $p$ th quasi-reflexive James space}, in preparation.

\bibitem{Ca}
J.W.~Calkin,
\newblock {\em Two-sided ideals and congruences in the ring of bounded operators in Hilbert space},
\newblock {Ann.\ of Math.} {\bf 42} (1941)  no.2, 839--873 .


\bibitem{Da}
M.~Daws,
\newblock {\em Closed ideals in the Banach algebra of operators on
classical non-separable spaces},
\newblock Math.Proc.Camb.Phil. Soc. { \bf 140} (2006) 317--332.

\bibitem{Du} R.M.~ Dudley, 
 \newblock{\em Real analysis and probability. Revised reprint of the 1989 original. }
 Cambridge Studies in Advanced Mathematics, 74. Cambridge University Press, Cambridge, 2002. x+555 pp. ISBN: 0-521-00754-2,

\bibitem{DJT}
J.~Diestel, H.~Jarchow, and A.~Tonge.
\newblock {\em Absolutely summing operators}, volume~43 of {\em Cambridge
  Studies in Advanced Mathematics}.
\newblock Cambridge University Press, Cambridge, 1995.

\bibitem{EM} I. S. Edelstein and B. S. Mityagin, {\em Homotopy type of linear groups of two classes of
Banach spaces,} Functional Anal. Appl. {\bf 4 } (1970), 221-- 231.

\bibitem{FHHMPZ} M.~Fabian, P.~Habala, P.~Hajek, V.~Montesinos Santaluc\'ia, J.~Pelant, and
 V.~Zizler, {\em Functional analysis and infinite-dimensional geometry.  }
CMS Books in Mathematics/Ouvrages de Math\'ematiques de la SMC, 8. Springer-Verlag, New York (2001) x+451 pp.

\bibitem{FT}
T.~Figiel\ and\ N.~Tomczak-Jaegermann.
\newblock {\em Projections onto {H}ilbertian subspaces of {B}anach spaces}, 
\newblock {Israel J.\ Math.}, {\bf 33} (1979) no. 2, 155--171.

\bibitem{GMF}
I.C.~Gohberg, A.S.~Markus and  I.A.~Feldman,
{\em Normally solvable operators and ideals associated with them},
\newblock (Russian) { Bul.\ Akad.\ \v Stiince RSS Moldoven},
  {\bf 76} (10) (1960) no.10  51--70.
\newblock English translation: {\em American.\ Math.\ Soc.\ Translat.}
{\bf  61} (1967) 63--84.


\bibitem{Gr}
B.~Gramsch, 
\newblock {\em Eine Idealstruktur Banachscher Operatoralgebren},
\newblock J. Reine Angew. Math {\bf 225}  (1967) 97--115.
 
 \bibitem{La}  N. J. Laustsen, {\em Maximal ideals in the algebra of operators on certain Banach spaces},
Proc. Edin. Math. Soc. {\bf 45 } (2002), 523 -- 546. 
 
 
\bibitem{LLR}
N.J.~Laustsen, R.J.~Loy, and C.J.~Read,
\newblock{\em The lattice of closed ideals in the Banach algebra of operators on
   certain Banach spaces},
\newblock J.\ Funct.\ Anal.  {\bf 214}  (2004)  no.1, 106--131.


\bibitem{LOSZ} N.J.~Laustsen, E.~Odell, Th.Schlumprecht, and A.~Zsak,
 {\em Dichotomy theorems for random matrices and closed ideals of operators
on $\big(\bigoplus _{n=1}^\infty\ell_1^n \big)_{c_0}$}, preprint. 
 
 
\bibitem{LSZ}
N.J.~Laustsen, Th.~Schlumprecht, and A.~Zsak,
\newblock{\em The lattice of closed ideals in the Banach algebra of
  operators on a certain dual Banach space},
\newblock J. of Operator Theory,  {\bf 56}(2006)  no 2,  391-402.



\bibitem{LT}
J.~Lindenstrauss and L.~Tzafriri,
\newblock {\em Classical {B}anach spaces. {II}}.
\newblock Springer-Verlag, Berlin, 1979.


\bibitem{LW} R. J. Loy and G. A. Willis, {\em Continuity of derivations on ${\mathcal B}(E)$ for certain Banach spaces $E$}, J. London Math. Soc. 40 (1989), 327Ð346.

\bibitem{Lu}
E.~Luft,
\newblock{ \em The two-sided closed ideals of the algebra of
  bounded linear operators of a Hilbertspace,} 
\newblock Czechoslovak Math. J. {\bf 18} (1968) 595--605.




\bibitem{Mi}
V.D.~Milman, 
\newblock{\em  Operators of class {$C\sb{0}$} and {$C\sp*\sb{0}$}.} (Russian),
\newblock {\em Teor. Funkci\u\i\ Funkcional. Anal. i Prilo\v zen.}, {\bf 10}  (1970) 15--26.


\bibitem{MS}
V.D.~Milman and G.~Schechtman.
\newblock {\em Asymptotic Theory of Finite Dimensional Normed Spaces},
\newblock LNM 1200, Springer-Verlag, New~York, 1986.


\bibitem{Pi}
A.~Pietsch.
\newblock {\em Operator ideals}, volume~16 of Mathematische Monographien
  [Mathematical Monographs].
\newblock VEB Deutscher Verlag der Wissenschaften, Berlin, 1978.


\bibitem{SSTT} B.~Sari, Th. Schlumprecht, N.~Tomczak-Jaegermann, and V.~G.~ Troitsky, 
\newblock{\em  On norm closed ideals in $L(\ell_p,\ell_q)$}. Studia Math. {\bf 179}  (2007), no. 3,  239--262.

\bibitem{S} Th. Schlumprecht, 
\newblock{\em An arbitrarily distortable Banach space}, 
\newblock Israel J. Math, {\bf 76}  (1991), no. 1-2, 81Ð95.

\end{thebibliography}
\end{document}